\providecommand{\U}[1]{\protect\rule{.1in}{.1in}}
\newtheorem{theorem}{Theorem}[section]
\newtheorem{proposition}[theorem]{Proposition}
\newtheorem{corollary}[theorem]{Corollary}
\newtheorem{example}[theorem]{Example}
\newtheorem{remark}[theorem]{Remark}
\newtheorem{final remark}[theorem]{Final Remark}
\newtheorem{definition}[theorem]{Definition}
\begin{document}

\title{\sc type and cotype of multilinear operators}
\date{}
\author{Geraldo Botelho\thanks{Supported by CNPq Grant
302177/2011-6 and Fapemig Grant PPM-00326-13.}~ and Jamilson R. Campos\thanks{Supported
by a CAPES Postdoctoral scholarship.\thinspace \hfill\newline\indent2010 Mathematics Subject
Classification: 47L22, 46G25, 47H60, 46B20.\newline\indent Keywords: multi-ideals, operator ideals, type, cotype, Aron-Berner extensions.}}

\maketitle

\begin{abstract} The aim of this paper is to start the study of multilinear generalizations of the classical ideals of linear operators of type $p$ and cotype $q$. As a first step in a theory we believe will be long and fruitful, we propose a notion of type and cotype of multilinear operators and the resulting classes of such mappings are studied in the setting of the theory of Banach/quasi-Banach ideals of multilinear operators. Distinctions between the linear and the multilinear theories are pointed out, typical multilinear features of the theory are emphasized and many illustrative examples are provided. The classes we introduce are related to the multi-ideals generated by the linear ideals of operators of some type/cotype and are proved to be maximal and Aron-Berner stable.
\end{abstract}

\section*{Introduction}

\hspace{0,6cm}The concepts of type and cotype of Banach spaces and linear operators emerged in the early 1970s, mainly from the works of Dubinsky, Hoffmann-J{\o}rgensen, Maurey, Pe{\l}czy\' nski, Pisier and Rosenthal, among others, initially in the study of vector-valued independent random variables. Rapidly the theory of type and cotype became a central part of several branches of Banach space theory, such as local theory, geometry of Banach spaces, probability in Banach spaces, operator ideals, etc. The impact of the notions of type and cotype is very well described in the survey \cite{handbook} by B. Maurey, and we also mention the books by Pisier \cite{pisier}, Defant and Floret \cite{defantfloret}, Diestel, Jarchow and Tonge \cite{djt} and Pietsch \cite{pietsch}.

Pietsch \cite{pietsch83} initiated the study of ideals of multilinear operators, and since then much research has been done in this direction. The multilinear theory is far from being a simple transposition of the linear theory, good examples of the diversity of the multilinear theory can be found, e.g., in \cite{note, quaestiones, david}.

Usually, there are many possible extensions of a given operator ideal to the multilinear setting. The case of the ideal of $p$-summing linear operators is typical and several different classes of {\it absolutely summing multilinear operators} have been studied (see, e.g., \cite{quaestiones, david}). Each of these multilinear generalizations of a given operator ideal enjoy certain desired properties, and, as a rule, none of them enjoy all the expected properties. This justifies the study of several different multilinear generalizations of a given operator ideal. Each of these multilinear generalizations is supposed not only to inherit good linear properties of the original operator ideal but to enjoy good typical multilinear properties as well, such as the ones related to holomorphy types, coherent/compatible sequences and Aron-Bener stability.

In this paper we start the study of multilinear generalizations of the ideals $\tau_p$ of linear operators of type $p$ and ${\cal C}_q$ of linear operators of cotype $q$. Besides of considering the multilinear ideals generated by  $\tau_p$ and ${\cal C}_q$ by the usual methods of generating multilinear ideals from operator ideals, we adapt the linear definitions to the multilinear case and study the resulting classes. Again, this adaptation can be done in several different ways. Having in mind the history of the study of classes of absolutely summing multilinear operators, as a first step we chose the adaptation by means of the diagonal instead of the whole matrix (cf. Definition \ref{defp}). Of course other adaptations can be done (and should be done); for example, an adaptation by means of the whole matrix will lead to the concept of {\it mulilinear operators of multiple type/cotype}. The core of this paper is to investigate the concepts introduced in Definition \ref{defp} and to compare them with the classes generated by the usual methods. We pay atention to the distinctions between the linear and multilinear theories, and several typical multilinear features of the theory are pointed out along the paper. For example, in Proposition \ref{postofinito} we establish that, contrary to the linear case, scalar-valued multilinear operators do not have all possible cotypes.

In Section 2 we prove the basic properties of the ideals of multilinear operators of some type/cotype considered in this paper and we give plenty of examples/counterexamples showing that the resulting classes of multilinear operators deserve to be studied. In Section 3 we explore the relationships between the classes we have just introduced and the ideals of multilinear operators generated by the ideals of linear operators of some type/cotype. In Section 4 we prove the maximality and the Aron-Berner stability of the classes of multilinear operators of some type/cotype.

\section{Background and notation}\label{back}

\hspace{0,6cm}
Let $n \in \mathbb{N}$, $E,E_1,\ldots,E_n$ and $F$ be Banach spaces over $\mathbb{K} = \mathbb{R}$ or $\mathbb{C}$. The Banach spaces of all continuous linear operators from $E$ to $F$ and of all continuous $n$-linear operators from $E_1 \times \cdots \times E_n$ to $F$ will be represented by $\mathcal{L}(E;F)$ and $\mathcal{L}(E_1,\ldots,E_n;F)$, respectively, and by $E'$  and $\mathcal{L}(E_1,\ldots,E_n)$ if $F = \mathbb{K}$. The identity operator on $E$ is denoted by $id_E$.

For $0 < p < \infty$ we denote by $\ell_p(E)$ the $p$-Banach space (Banach space if $p \geq 1$) of absolutely $p$-summable $E$-valued sequences endowed with its usual $\ell_p$-norm $\|\cdot\|_p$; and $\ell_\infty(E)$ stands for the Banach space of bounded $E$-valued sequences with the sup norm $\|\cdot\|_\infty$.

Let $(r_j)_{j=1}^\infty$ be the Rademacher functions on $[0,1]$ (see, e.g., \cite[p.\,10]{djt}). A sequence $(x_j)_{j=1}^\infty$ in $E$ is said to be \emph{almost unconditionally summable} if the series $\sum_j r_j(t)x_j$ converges in $L_p([0,1];E)$ for some (and then all) $0 <p< \infty$. The set of these sequences, denoted by $\mathrm{Rad}(E)$, is a Banach space equipped with the norm
$$\|(x_i)_{i=1}^\infty\|_{\mathrm{Rad}(E)} = \left\Vert \sum_{j=1}^\infty r_jx_{j}\right\Vert_{L_2(E)}=\left(\int_0^1 \left\Vert \sum_{j=1}^\infty r_j(t)x_{j}\right\Vert^2 dt \right)^{1/2}.$$
We also work with the Banach space
$$\displaystyle{{\rm RAD}(E)} = \left\{(x_j)_{j=1}^\infty \in E^{\mathbb{N}}: \|(x_j)_{j=1}^\infty\|_{{\rm RAD}(E)}:= \sup_k \|(x_j)_{j=1}^k\|_{{\rm Rad}(E)}< +\infty\right\},$$
see, e.g., \cite{blascoetal,botelhocampos}. It is well known that ${\rm Rad}(E) \subseteq {\rm RAD}(E)$ and that equality holds (isometrically) if and only if $E$ does not contain a copy of $c_0$.

Given linear functionals $\varphi_1' \in E_1', \ldots, \varphi_n' \in E_n'$ and a vector $b \in F$, consider the following continuous $n$-linear operator:
$$\varphi_1 \otimes \cdots \otimes\varphi_n \otimes b \colon E_1 \times \cdots \times E_n \longrightarrow F~,~\varphi_1 \otimes \cdots \otimes\varphi_n \otimes b (x_1, \ldots, x_n) = \varphi_1(x_1) \cdots \varphi_n(x_n)b.  $$
Linear combinations of operators of this type are called {\it $n$-linear operators of finite type}. A multilinear operator has {\it finite rank} if its range generates a finite-dimensional subspace of the target space.

\begin{definition}\rm Let $n\in \mathbb{N}$ and $0 < q \leq 1$. A  {\it $q$-Banach ideal of $n$-linear operators}, or simply a {\it $q$-Banach $n$-ideal}, is a pair $({\cal M}_n, \|\cdot\|_{{\cal M}_n})$ where ${\cal M}_n$ is a subclass of the class of $n$-linear operators between Banach spaces, $\|\cdot\|_{{\cal M}_n} \colon {\cal M}_n\longrightarrow \mathbb{R}$ is a function such that, for all Banach spaces $E_1, \ldots, E_n,F$, the component
${\cal M}_n(E_1, \ldots, E_n;F) := {\cal L}(E_1, \ldots, E_n;F)\cap {\cal M}_n$
is a linear subspace of ${\cal L}(E_1, \ldots, E_n;F)$ on which $\|\cdot\|_{{\cal M}}$ is a complete $q$-norm; and:\\
(i) ${\cal M}_n(E_1, \ldots, E_n;F)$ contains all $n$-linear operators of finite type and $$\left\|I_n \colon \mathbb{K}^n \longrightarrow \mathbb{K}~,~I_n(\lambda_1, \ldots, \lambda_n) = \lambda_1 \cdots \lambda_n \right\|_{{\cal M}} =1; $$
(ii) If $A \in {\cal M}_n(E_1, \ldots, E_n;F)$, $u_m \in {\cal L}(G_m,E_m)$, $m = 1, \ldots, n$, and $t \in {\cal L}(F;H)$, then $t \circ A \circ (u_1, \ldots, u_n) \in {\cal M}_n(G_1, \ldots, G_n;H)$ and
$$\|t \circ A \circ (u_1, \ldots, u_n)\|_{{\cal M}} \leq \|t\| \cdot \|A\|_{{\cal M}} \cdot \|u_1\| \cdots \|u_n\|.  $$
\end{definition}

For $q = 1$ we say that $\cal M$ is a {\it Banach ideal of $n$-linear operators}. The case $n=1$ recovers the theory of Banach/quasi-Banach operator ideals, for which we refer to \cite{defantfloret, pietsch}. For the multilinear case see, e.g., \cite{botelhocampos, scand, prims, floret02, galicerjmaa}.

\section{Type and cotype of multilinear operators}

\hspace{0,6cm} Much research has been done on multilinear generalizations of well established operator ideals. We propose the following concepts as a first step in the extension of the ideals of operators of type and cotype to the multilinear setting.

\begin{definition}\label{defp}\rm
Let $n \in \mathbb{N}$ and  $0 <p_1,\ldots,p_n,q <\infty$ be given. We say that a continuous $n$-linear operator $T \in \mathcal{L}(E_1,\ldots,E_n;F)$ has:\\
$\bullet$ \emph{type} $(p_1,\ldots,p_n)$ if there is a constant $C>0$ such that, however we choose finitely many vectors $(x_j^{(1)},\ldots,x_j^{(n)})$ in $E_1\times \cdots \times E_n$, $j \in \{1,\ldots,k\}$,
\begin{equation}\label{multip}
   \left\|\left(T(x_j^{(1)},\ldots,x_j^{(n)})\right)_{j=1}^k \right\|_{{\rm Rad}(F) } \leq C \cdot \prod_{i=1}^n\left\Vert (x_j^{(i)})_{j=1}^k\right\Vert_{p_i} .
\end{equation}
The infimum of such constants $C$ is denoted by $\|T\|_{\tau^n_{(p_1,\ldots,p_n)}}$.\\
$\bullet$ \emph{cotype} $q$ if there exists a constant $M>0$ such that, for any choice of finitely many vectors $(x_j^{(1)},\ldots,x_j^{(n)})$ in $E_1\times \cdots \times E_n$, $j \in \{1,\ldots,k\}$,
\begin{equation}\label{multiq}
\left\|\left(T(x_j^{(1)},\ldots,x_j^{(n)})\right)_{j=1}^k \right\|_{q}  \leq M \cdot \prod_{i=1}^n \left\|(x_j^{(i)})_{j=1}^k\right\|_{\mathrm{Rad}(E)}.
\end{equation}
The infimum of such constants $M$ is denoted by $\|T\|_{C^n_{q}}$.\\
\indent The sets of all $n$-linear operators of type $(p_1,\ldots,p_n)$ and cotype $q$ from $E_1 \times \cdots \times E_n$ to $F$ are denoted by $\tau_{p_1,\ldots,p_n}(E_{1},\ldots,E_{n};F)$ and ${\cal C}^n_{q}(E_{1},\ldots,E_{n};F)$, respectively. \end{definition}

Making $n=1$ we recover the well studied Banach ideals of linear operators of type $p$ and of cotype $q$. Remember that a normed space $E$ has type $p$ (cotype $q$) if $id_E$ has type $p$ (cotype $q$). For background on type and cotype of spaces and linear operators see \cite[Chapter I, Sections 7 and 9]{defantfloret} and \cite[Chapter 21]{pietsch}.

It is not difficult to check that if $\frac{1}{p_1}+ \cdots + \frac{1}{p_n}   < \frac{1}{2}$, then the only $n$-linear operator of type $(p_1,\ldots,p_n)$ is $T=0$; and if $1 \leq  \frac{1}{p_1}+ \cdots + \frac{1}{p_n}$ then any continuous $n$-linear operator has type $(p_1,\ldots,p_n)$. In the same fashion, if $q < \frac{2}{n}$, then only the null operator has cotype $q$; and all continuous $n$-linear operators have cotype $q=\infty$. So we restrict ourselves to $n$-linear operators of type $(p_1, \ldots, p_n)$ with $\frac{1}{2} \leq \frac{1}{p_1}+ \cdots + \frac{1}{p_n}  < 1$ and to $n$-linear operators of cotype $q$ with $\frac{2}{n} \leq q < \infty$. Such an $n$-tuple $(p_1,\ldots,p_n)$  is said to be a {\it proper type for $n$-linear operators} and such a number $q$ is said to be a {\it proper cotype for $n$-linear operators}.

\begin{example}\label{ex1}\rm (a) We shall see soon (Theorem \ref{teoideal}) that multilinear operators of finite type have any proper type/cotype. As in the linear case, it is easy to check that finite rank $n$-linear operators have any proper type $(p_1, \ldots, p_n)$.\\
(b) Let $F$ be isomorphic to a Hilbert space $H$ and $\frac{1}{2} =  \frac{1}{p_1}+ \cdots + \frac{1}{p_n}$. A standard computation (orthogonality of the Rademacher functions, H\"older's inequality) shows that, regardless of the Banach spaces $E_1, \ldots, E_n$,  every operator $T \in \mathcal{L}(E_1,\ldots,E_n;F)$ has type $(p_1,\ldots,p_n)$ and $\|T\|_{\tau_{p_1,\ldots,p_n}} \leq \|T\| \cdot d(F,H)$, where $d$ is the Banach-Mazur distance.\\
(c) Let $n \in \mathbb{N}$ and $E_i$ be isomorphic to a Hilbert space $H_i$ for $i=1,\ldots,n$. As before, regardless of the Banach space $F$, any continuous $n$-linear operator $T$ from $E_1 \times \cdots \times E_n$ to $F$ has cotype $\frac{2}{n}$ and $\|T\|_{{\cal C}^n_{2/n}} \leq \|T\| \cdot \prod\limits_{i=1}^n d(E_i, H_i) $.
\end{example}

\begin{remark}\rm \label{firstex}
It is plain that if an $n$-linear operator $T$ has type $(p_1,\ldots,p_n)$, then $T$ has type $(r_1,\ldots,r_n)$ whenever $r_i \leq p_i$ for $i \in \{1,\ldots,n\}$ and $\frac{1}{r_1} + \cdots + \frac{1}{r_n} < 1$. Similarly, if $T$ has cotype $q$, then has cotype $r$ for $q \leq r$.
\end{remark}

The alert reader noticed that we did not mention the cotype of finite rank multilinear operators in Example \ref{ex1}. The reason is that this issue discloses a deep distinction between the linear and multilinear theories:

\begin{proposition}\label{postofinito} {\rm (a)} Finite rank multilinear operators have cotype 1.\\
{\rm (b)} Finite rank $n$-linear operators have any proper cotype if and only if $n = 1$ or $ n =2$.
\end{proposition}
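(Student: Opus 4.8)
In part (a) we take $n\geq 2$. The plan is first to reduce to scalar-valued forms: if $T\in\mathcal{L}(E_1,\dots,E_n;F)$ has finite rank, its range spans a finite-dimensional $G\subseteq F$, and choosing an Auerbach basis $b_1,\dots,b_m$ of $G$ with coordinate functionals $\psi_1,\dots,\psi_m$ we may write $T(x^{(1)},\dots,x^{(n)})=\sum_{l=1}^m A_l(x^{(1)},\dots,x^{(n)})\,b_l$ with $A_l:=\psi_l\circ T\in\mathcal{L}(E_1,\dots,E_n)$ and $\|A_l\|\leq\|T\|$. Since $\|b_l\|=1$, the triangle inequality in $F$ gives $\big\|(T(x_j^{(1)},\dots,x_j^{(n)}))_{j=1}^k\big\|_1\leq\sum_{l=1}^m\big\|(A_l(x_j^{(1)},\dots,x_j^{(n)}))_{j=1}^k\big\|_1$, so it suffices to prove that every bounded scalar-valued $n$-linear form $A$ satisfies \eqref{multiq} with $q=1$ and a constant depending only on $n$ and $\|A\|$.

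Fix vectors $(x_j^{(1)},\dots,x_j^{(n)})$, $j=1,\dots,k$, and choose unimodular scalars $\varepsilon_j$ with $\varepsilon_j A(x_j^{(1)},\dots,x_j^{(n)})=|A(x_j^{(1)},\dots,x_j^{(n)})|$. The crucial idea is to produce $\sum_j\varepsilon_j A(x_j^{(1)},\dots,x_j^{(n)})$ as a \emph{single} Rademacher average of $A$, by means of a \emph{chain} of $n-1$ independent Rademacher sequences $(r_j^{(1)})_j,\dots,(r_j^{(n-1)})_j$ on probability spaces $\Omega_1,\dots,\Omega_{n-1}$: evaluate $A$ with its $i$-th variable set equal to $\sum_j r_j^{(i-1)}r_j^{(i)}x_j^{(i)}$, under the conventions $r_j^{(0)}:=\varepsilon_j$ and $r_j^{(n)}:=1$, and integrate over $\Omega_1\times\cdots\times\Omega_{n-1}$. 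Expanding by multilinearity, the coefficient of $A(x_{a_1}^{(1)},\dots,x_{a_n}^{(n)})$ is $\varepsilon_{a_1}\prod_{i=1}^{n-1}\int_{\Omega_i}r_{a_i}^{(i)}r_{a_{i+1}}^{(i)}=\varepsilon_{a_1}\prod_{i=1}^{n-1}\delta_{a_i,a_{i+1}}$, which vanishes unless $a_1=\dots=a_n$; hence the integral equals $\sum_j\varepsilon_j A(x_j^{(1)},\dots,x_j^{(n)})=\sum_j|A(x_j^{(1)},\dots,x_j^{(n)})|$. (For $n=2$ this is the textbook computation with a single Rademacher sequence; for $n\geq 3$ a single sequence would be useless since $\int_0^1 r_j(t)^3\,dt=0$, and it is the chain — coupling each pair of adjacent slots by its own sequence — that makes the diagonal emerge.)

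It then remains to estimate this integral. Bounding the integrand by $\|A\|\prod_{i=1}^n f_i$, where $f_1=\big\|\sum_j\varepsilon_j r_j^{(1)}x_j^{(1)}\big\|$ depends only on $\Omega_1$, $f_i=\big\|\sum_j r_j^{(i-1)}r_j^{(i)}x_j^{(i)}\big\|$ depends only on $\Omega_{i-1}\times\Omega_i$ for $2\leq i\leq n-1$, and $f_n=\big\|\sum_j r_j^{(n-1)}x_j^{(n)}\big\|$ depends only on $\Omega_{n-1}$, I would integrate out $\Omega_1,\Omega_2,\dots,\Omega_{n-1}$ in succession, applying the Cauchy--Schwarz inequality in the variable being removed at each step. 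Because the entrywise product of two independent Rademacher sequences is again a Rademacher sequence, one has $\big\|\sum_j r_j^{(i-1)}r_j^{(i)}x_j^{(i)}\big\|_{L_2(\Omega_{i-1}\times\Omega_i;E_i)}=\big\|(x_j^{(i)})_j\big\|_{\mathrm{Rad}(E_i)}$, so the successive Cauchy--Schwarz steps peel off exactly the factors $\big\|(x_j^{(i)})_j\big\|_{\mathrm{Rad}(E_i)}$, $i=1,\dots,n$; over $\mathbb{R}$ the sign twist $\varepsilon_j$ costs nothing, while over $\mathbb{C}$ the contraction principle costs only a universal constant. This yields $\sum_j|A(x_j^{(1)},\dots,x_j^{(n)})|\leq C\,\|A\|\prod_{i=1}^n\big\|(x_j^{(i)})_j\big\|_{\mathrm{Rad}(E_i)}$, proving (a). For the ``if'' part of (b): when $n=1$, a finite rank operator is a finite sum of rank-one operators $\varphi\otimes b$, and $\big\|(\varphi(x_j)b)_j\big\|_q\leq\|b\|\,\big\|(\varphi(x_j))_j\big\|_{\ell_2}\leq\|\varphi\|\,\|b\|\,\big\|(x_j)_j\big\|_{\mathrm{Rad}(E)}$ for every $q\geq 2$, which are exactly the proper cotypes for $n=1$; when $n=2$, part (a) together with Remark \ref{firstex} gives every cotype $q\geq 1$, i.e. all proper cotypes.

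For the ``only if'' part of (b), let $n\geq 3$, so that $q=2/n$ is a proper cotype with $2/n<1$. Consider the rank-one $n$-linear form $T\colon\ell_n\times\cdots\times\ell_n\to\mathbb{K}$ given by $T\big((\xi_m^{(1)})_m,\dots,(\xi_m^{(n)})_m\big)=\sum_m\xi_m^{(1)}\cdots\xi_m^{(n)}$, which is bounded (with $\|T\|\leq 1$) by the generalized H\"older inequality. Testing \eqref{multiq} on $x_j^{(i)}=e_j$ for $j=1,\dots,k$ and every $i$ gives $T(e_j,\dots,e_j)=1$, so the left-hand side equals $k^{1/q}$, whereas $\big\|(e_j)_{j=1}^k\big\|_{\mathrm{Rad}(\ell_n)}=k^{1/n}$ makes the right-hand side a constant times $k$; for $q<1$ this would force $k^{1/q-1}$ to stay bounded, which is absurd. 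Hence, for $n\geq 3$, finite rank $n$-linear operators fail to have some proper cotype, completing the equivalence. I expect the one genuinely non-routine point to be the diagonal trick for $n\geq 3$ in part (a): the bilinear case is the familiar Rademacher-orthogonality computation, but its breakdown for three or more factors forces the chain of independent sequences, and pairing it with the matching telescoping Cauchy--Schwarz estimate is where the argument has to be designed rather than transcribed; the reduction to scalar forms and the real-versus-complex bookkeeping are routine.
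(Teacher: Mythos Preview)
Your proof is correct. The overall architecture matches the paper's: reduce finite rank to scalar-valued forms, show every bounded $n$-linear form has cotype $1$, and for the ``only if'' in (b) use exactly the same diagonal form $T\colon \ell_n^n\to\mathbb{K}$ tested on unit vectors.

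The one genuine difference is in part (a) for the scalar-valued case. The paper does not argue this step at all: it simply invokes \cite[Proposition~3.1]{monat}, and then remarks that the finite-rank case follows. You instead give a self-contained argument via the chain of $n-1$ independent Rademacher sequences coupling adjacent slots, followed by the telescoping Cauchy--Schwarz estimate. This is precisely the kind of polarization/decoupling identity underlying the result you are citing-by-reproving, and your handling of the iterated Cauchy--Schwarz (each step peeling off one $\mathrm{Rad}(E_i)$-norm because the product of two independent Rademacher sequences is again Rademacher) is clean and correct. What your route buys is self-containment and an explicit constant; what the paper's route buys is brevity. Your reduction via an Auerbach basis and the treatment of the complex case via the contraction principle are both standard and fine.
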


\begin{proof} (a) From \cite[Proposition 3.1]{monat} it follows that multilinear forms (multilinear scalar-valued operators) have cotype 1. The case of arbitrary finite rank multilinear operators now follows easily.  \\
(b) The case $n = 1$ is obvious. Supposing $n = 2$ the result follows from (a) because $q = 1$ is the smallest proper cotype for bilinear operators. Conversely, for $n \geq 3$ consider the finite rank $n$-linear operator
$$T \colon \ell_n \times \stackrel{(n)}{\cdots} \times \ell_n \longrightarrow \mathbb{K}~,~T\left((\lambda_j^{(1)})_{j=1}^\infty, \ldots,  (\lambda_j^{(n)})_{j=1}^\infty \right) =  \sum_{j=1}^\infty \lambda_j^{(1)}\cdots \lambda_j^{(n)}. $$
By $(e_j)_{j=1}^\infty$ we denote the canonical unit vectors in $\ell_n$.
Assuming that $T$ has cotype $q$, there is a constant $C>0$ such that
\begin{align*}
k^{1/q} = \left(\sum_{j=1}^k |T(e_j,\ldots, e_j)|^q \right)^{1/q}\leq  C \cdot \left(\left\|(e_j)_{j=1}^k\right\|_{{\rm Rad}(\ell_n) } \right)^n= Ck
\end{align*}
for every $k \in \mathbb{N}$. This shows that $q \geq 1$, and combining with (a) we have
$$1 = \min\{q : T {\rm ~has ~cotype~} q\}. $$
In particular, $T$ has no proper cotype $\frac{2}{n} \leq q < 1$.
\end{proof}

So, unlike the linear case, in general finite rank multilinear operators {\it do not} have any proper cotype. Actually, more than a distinction between the linear and multilinear cases, we have a distinction between the linear/bilinear case, in which all finite rank operators have any proper cotype, and $n$-linear cases for $n > 2$.

The study of multi-ideals defined, or characterized, by the transformation of vector-valued sequences has been recently systematized in \cite{botelhocampos}. The classes we are studying here fit in this framework, so we apply the abstract results of \cite{botelhocampos} to derive important properties of multilinear operators of type $(p_1,\ldots,p_n)$ and of cotype $q$.  Keep in mind that, although stated for Banach ideals, the results of \cite{botelhocampos} hold for quasi-Banach ideals as well (cf. \cite[Remark 1.3(b)]{botelhocampos}).

From now on, $n$ is a positive integer not smaller than 2, $(p_1, \ldots, p_n)$ is a proper type and $q$ is a proper cotype for $n$-linear operators, that is, $\frac{1}{2} \leq \frac{1}{p_1}+ \cdots + \frac{1}{p_n} < 1$ and $q \geq \frac{2}{n}$.

\begin{theorem}\label{equivp}
 Given $(p_1,\ldots,p_n)$, the following statements are equivalent for a given $n$-linear operator $T\in\mathcal{L}(E_{1},\ldots,E_{n};F)$:\\
{\rm (i)} $T$ is an operator of type $(p_1,\ldots,p_n)$;\\
{\rm (ii)} $\left( T(x_j^{(1)},\ldots,x_j^{(n)}) \right)_{j=1}^\infty \in \mathrm{Rad}(F)$ whenever $(x_j^{(i)})_{j=1}^\infty \in \ell_{p_i}(E_i)$, $i \in \{1,\ldots,n\}$;\\
{\rm (iii)} The map
$$\widetilde{T}\left((x_j^{(1)})_{j=1}^\infty ,\ldots, (x_j^{(n)})_{j=1}^\infty \right) = \left(T(x_j^{(1)},\ldots,x_j^{(n)}) \right)_{j=1}^\infty$$
is a well-defined continuous $n$-linear operator from $\ell_{p_1}(E_1) \times \cdots \times \ell_{p_n}(E_n)$ to $\mathrm{Rad}(F)$.\\
\indent  In this case, $\Vert\widetilde{T}\Vert = \|T\|_{\tau_{p_1,\ldots,p_n}}$. Furthermore, the space $\mathrm{Rad}(F)$ in $\mathrm{(ii)}$ and $\mathrm{(iii)}$ can be equivalently replaced by $\mathrm{RAD}(F)$.
\end{theorem}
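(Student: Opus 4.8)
Here is a plan.

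The plan is to prove the cycle (i) $\Rightarrow$ (iii) $\Rightarrow$ (ii) $\Rightarrow$ (i), reading off the identity $\|\widetilde{T}\| = \|T\|_{\tau_{p_1,\ldots,p_n}}$ along the way, and then to observe that every step survives the replacement of $\mathrm{Rad}(F)$ by $\mathrm{RAD}(F)$. The implication (iii) $\Rightarrow$ (ii) is immediate from the definitions, so I would start with (i) $\Rightarrow$ (iii): given $(x_j^{(i)})_j \in \ell_{p_i}(E_i)$ for $i=1,\ldots,n$, set $y_j := T(x_j^{(1)},\ldots,x_j^{(n)})$. Applying (\ref{multip}) to the finite family indexed by $m \le j \le k$ gives $\left\|(y_j)_{j=m}^k\right\|_{\mathrm{Rad}(F)} \le C\prod_{i=1}^n \left\|(x_j^{(i)})_{j=m}^k\right\|_{p_i} \le C\prod_{i=1}^n\left\|(x_j^{(i)})_{j\ge m}\right\|_{p_i}$, and the right-hand side tends to $0$ as $m\to\infty$ since each factor is the tail of a convergent $p_i$-series (monotone under truncation, also when $p_i<1$); hence $\left(\sum_{j=1}^k r_j y_j\right)_k$ is Cauchy in the complete space $L_2(F)$, so $(y_j)_j \in \mathrm{Rad}(F)$, and letting $k\to\infty$ with $m=1$ yields $\|(y_j)_j\|_{\mathrm{Rad}(F)} \le C\prod_i\|(x_j^{(i)})_j\|_{p_i}$. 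Thus $\widetilde{T}$ is a well-defined $n$-linear map (all operations in $\ell_{p_i}(E_i)$ and $\mathrm{Rad}(F)$ being coordinatewise) with $\|\widetilde{T}\|\le C$, and taking the infimum over admissible $C$ gives $\|\widetilde{T}\| \le \|T\|_{\tau_{p_1,\ldots,p_n}}$.

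For (ii) $\Rightarrow$ (i): hypothesis (ii) says precisely that $\widetilde{T}\colon \ell_{p_1}(E_1)\times\cdots\times\ell_{p_n}(E_n) \to \mathrm{Rad}(F)$ is a well-defined $n$-linear map, so I would show its graph is closed and apply the multilinear closed graph theorem — valid here because all spaces involved are complete and metrizable: fixing all but one variable reduces it to the linear closed graph theorem, giving separate continuity, which upgrades to joint continuity by the uniform boundedness principle over Baire domains. To check closedness, suppose $(x_{(m)}^{(i)})_m \to x^{(i)}$ in $\ell_{p_i}(E_i)$ for each $i$ and $\widetilde{T}(x_{(m)}^{(1)},\ldots,x_{(m)}^{(n)}) \to y$ in $\mathrm{Rad}(F)$. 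The coordinate functionals are continuous on $\ell_{p_i}(E_i)$ and also on $\mathrm{Rad}(F)$: writing $g=\sum_j r_j z_j \in L_2(F)$, orthonormality of the Rademachers gives $z_{j_0} = \int_0^1 r_{j_0}(t)g(t)\,dt$, whence $\|z_{j_0}\|_F \le \|g\|_{L_2(F)} = \|(z_j)_j\|_{\mathrm{Rad}(F)}$ by Cauchy--Schwarz. Then for each fixed $j$ one has $(x_{(m)}^{(i)})_j \to (x^{(i)})_j$ in $E_i$, so by continuity of $T$ the $j$-th coordinate of $\widetilde{T}(x_{(m)}^{(1)},\ldots,x_{(m)}^{(n)})$ converges to both $y_j$ and $T((x^{(1)})_j,\ldots,(x^{(n)})_j)$; since coordinate functionals separate points of $\mathrm{Rad}(F)$, this forces $y = \widetilde{T}(x^{(1)},\ldots,x^{(n)})$. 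Once $\widetilde{T}$ is known to be bounded, feeding finitely supported sequences into it recovers (\ref{multip}) with constant $\|\widetilde{T}\|$, so $\|T\|_{\tau_{p_1,\ldots,p_n}} \le \|\widetilde{T}\|$; together with the previous paragraph this gives the asserted equality. (A hands-on alternative avoiding the closed graph theorem: if (i) fails, normalize and block-concatenate the offending finite families to build sequences in the $\ell_{p_i}(E_i)$ whose image escapes $\mathrm{Rad}(F)$, contradicting (ii).)

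For the $\mathrm{RAD}(F)$ version, I would note that $\mathrm{RAD}(F)$ is a Banach space whose coordinate functionals are continuous (the same estimate applied to truncations gives $\|z_{j_0}\|_F \le \|(z_j)_j\|_{\mathrm{RAD}(F)}$), that $\mathrm{Rad}(F) \hookrightarrow \mathrm{RAD}(F)$ is contractive, and that on \emph{finitely supported} sequences the $\mathrm{Rad}(F)$- and $\mathrm{RAD}(F)$-norms coincide because $k \mapsto \|(z_j)_{j=1}^k\|_{\mathrm{Rad}(F)}$ is nondecreasing (a conditional-expectation contraction argument, since $\sum_{j=1}^k r_j z_j$ is the conditional expectation of $\sum_{j=1}^{k+1} r_j z_j$). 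Consequently condition (i) is literally unchanged if $\mathrm{RAD}(F)$ is written in (\ref{multip}), and the two arguments above repeat word for word with $\mathrm{RAD}(F)$ in place of $\mathrm{Rad}(F)$, the only properties used being completeness, continuity of coordinate functionals, and agreement of the norms on finitely supported sequences. I expect the one point needing care to be the clean invocation of the multilinear closed graph theorem in the quasi-Banach setting; everything else is bookkeeping.
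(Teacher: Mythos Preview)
Your argument is correct and self-contained, but it follows a different route from the paper. The paper does not prove Theorem~\ref{equivp} directly at all: it simply observes that $\ell_{p_1}(\cdot),\ldots,\ell_{p_n}(\cdot)$ and $\mathrm{RAD}(\cdot)$ are \emph{finitely determined sequence classes} in the sense of \cite{botelhocampos}, notes that $\mathrm{Rad}(\cdot)\subseteq\mathrm{RAD}(\cdot)$, and quotes \cite[Proposition~1.4]{botelhocampos} to obtain the equivalences as a special case of an abstract result. Your proof, by contrast, is an elementary Cauchy-tail argument for (i)~$\Rightarrow$~(iii) together with a closed-graph argument for (ii)~$\Rightarrow$~(i), handling the $\mathrm{RAD}(F)$ variant by the observation that the two norms agree on finitely supported sequences. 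What the paper's approach buys is brevity and a display of the general machinery that is reused immediately afterwards (in the proof of Theorem~\ref{teoideal}); what your approach buys is that the reader does not have to open another paper, and every analytic point is visible. One small remark: your closing worry about the quasi-Banach setting is moot here, since the standing hypothesis $\frac{1}{p_1}+\cdots+\frac{1}{p_n}<1$ forces each $p_i>1$, so every $\ell_{p_i}(E_i)$ is a genuine Banach space and the multilinear closed graph theorem applies without subtlety (your parenthetical ``also when $p_i<1$'' is therefore unnecessary).
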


\begin{theorem}\label{equivq}
Given $q$, the following
statements are equivalent for a given $n$-linear operator $T\in\mathcal{L}(E_{1},\ldots,E_{n};F)$:\\
{\rm (i)} $T$ is an operator of cotype $q$;\\
{\rm (ii)} $\left( T(x_j^{(1)},\ldots,x_j^{(n)}) \right)_{j=1}^\infty \in \ell_q(F)$ whenever $(x_j^{(i)})_{j=1}^\infty \in \mathrm{Rad}(E_i)$, $i \in \{1,\ldots,n\}$;\\
 {\rm (iii)} The map
$$\widetilde{T}\left((x_j^{(1)})_{j=1}^\infty ,\ldots, (x_j^{(n)})_{j=1}^\infty \right) = \left(T(x_j^{(1)},\ldots,x_j^{(n)}) \right)_{j=1}^\infty$$
is a well-defined continuous $n$-linear operator from $\mathrm{Rad}(E_1) \times \cdots \times \mathrm{Rad}(E_n)$ to $\ell_q(F)$.\\
 \indent In this case, $\|\widetilde{T}\| = \|T\|_{{\cal C}^n_{q}}$. Furthermore, the spaces $\mathrm{Rad}(E_i)$ in $\mathrm{(ii)}$ and $\mathrm{(iii)}$ can be equivalently replaced by $\mathrm{RAD}(E_i)$, $i \in \{1,\ldots,n\}$.
\end{theorem}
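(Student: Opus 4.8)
The natural route is to invoke the vector-valued sequence-class machinery of \cite{botelhocampos}: once one verifies that $E\mapsto \mathrm{Rad}(E)$, $E\mapsto \mathrm{RAD}(E)$ and $F\mapsto \ell_q(F)$ are finitely determined, linearly stable sequence classes and that ${\cal C}^n_q$ is precisely the class of $n$-linear operators sending $\big(\mathrm{Rad}(E_1),\ldots,\mathrm{Rad}(E_n)\big)$-sequences to $\ell_q(F)$-sequences, the three equivalences and the norm identity fall out of the general transference theorem there. For a self-contained argument I would prove the cycle (i)$\Rightarrow$(iii)$\Rightarrow$(ii)$\Rightarrow$(i). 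The implication (i)$\Rightarrow$(iii) is a truncation argument: given $(x^{(i)}_j)_j\in\mathrm{Rad}(E_i)$, apply \eqref{multiq} to the finite family indexed by $j\le k$ and use that a truncation $(x^{(i)}_j)_{j=1}^k$ has $\mathrm{Rad}$-norm at most that of the full sequence (it is a conditional expectation of it, hence a contraction in $L_2$), so $\sup_k\big\|(T(x^{(1)}_j,\ldots,x^{(n)}_j))_{j=1}^k\big\|_q\le \|T\|_{{\cal C}^n_q}\prod_{i=1}^n\|(x^{(i)}_j)_j\|_{\mathrm{Rad}(E_i)}$; this is exactly membership of the image sequence in $\ell_q(F)$, and $n$-linearity and boundedness of $\widetilde T$ (with $\|\widetilde T\|\le\|T\|_{{\cal C}^n_q}$) are then immediate, as $\widetilde T$ acts coordinatewise. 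The implication (iii)$\Rightarrow$(ii) is trivial.

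The heart of the matter is (ii)$\Rightarrow$(i), which I would prove by contraposition via a gliding-hump/block construction (alternatively one can run a multilinear closed-graph argument, using that the coordinate evaluations $\mathrm{Rad}(E_i)\to E_i$ and $\ell_q(F)\to F$ are contractions, to show $\widetilde T$ is continuous directly). If $T$ fails cotype $q$, then for every finite $R$ there is a finite family $(x^{(1)}_j,\ldots,x^{(n)}_j)$, $j\le k$, with $\big\|(T(x^{(1)}_j,\ldots,x^{(n)}_j))_{j=1}^k\big\|_q>R\prod_i\|(x^{(i)}_j)_{j=1}^k\|_{\mathrm{Rad}(E_i)}$ (families with vanishing denominator have vanishing numerator by $n$-linearity, hence are irrelevant). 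Pick such a family for $R=R_m:=2^{2mn}$; by homogeneity normalize each coordinate so that $\|(x^{(i)}_j)_{j=1}^k\|_{\mathrm{Rad}(E_i)}=1$, then rescale every coordinate by $\varepsilon_m:=2^{-m}$ and place the $m$-th family in a block of indices disjoint from the others, forming sequences $z^{(1)},\ldots,z^{(n)}$. Since $\mathrm{Rad}(E_i)$ is a Banach space and the $m$-th block of $z^{(i)}$ has $\mathrm{Rad}$-norm $\varepsilon_m$ (by the rotation-invariance of the Rademacher sequence) with $\sum_m\varepsilon_m<\infty$, the partial concatenations are Cauchy and $z^{(i)}\in\mathrm{Rad}(E_i)$. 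On the other hand, the $m$-th block of $(T(z^{(1)}_j,\ldots,z^{(n)}_j))_j$ has $\ell_q$-norm $\varepsilon_m^n$ times the original numerator, hence exceeds $\varepsilon_m^nR_m=2^{mn}$, so by disjointness of supports $\big\|(T(z^{(1)}_j,\ldots,z^{(n)}_j))_j\big\|_q^q\ge\sum_m 2^{mnq}=+\infty$, contradicting (ii).

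It remains to match the norms and to pass to $\mathrm{RAD}$. The inequality $\|\widetilde T\|\le\|T\|_{{\cal C}^n_q}$ is built into (i)$\Rightarrow$(iii); conversely, evaluating $\widetilde T$ on finitely supported sequences (padding a finite family with zeros does not change its $\mathrm{Rad}$-norm, by the very definition of that norm) recovers \eqref{multiq} with constant $\|\widetilde T\|$, so $\|T\|_{{\cal C}^n_q}\le\|\widetilde T\|$. Finally, $\mathrm{Rad}(E)\hookrightarrow\mathrm{RAD}(E)$ isometrically, since for an element of $\mathrm{Rad}(E)$ the truncation $\mathrm{Rad}$-norms increase (conditional-expectation contraction) to the full $\mathrm{Rad}$-norm, which is by definition its $\mathrm{RAD}$-norm; hence replacing $\mathrm{Rad}(E_i)$ by $\mathrm{RAD}(E_i)$ in (ii) and (iii) only strengthens the statements, while the truncation argument of (i)$\Rightarrow$(iii) goes through verbatim with $\mathrm{RAD}(E_i)$ in the domain (the $\mathrm{RAD}$-norm dominates every truncation $\mathrm{Rad}$-norm), producing a well-defined continuous $\widetilde T$ on $\mathrm{RAD}(E_1)\times\cdots\times\mathrm{RAD}(E_n)$ of the same norm. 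The \emph{main obstacle} is the calibration in the gliding-hump step: the block scalars must be tuned so that the inputs stay $\mathrm{Rad}$-summable while the geometric loss $\varepsilon_m^{\,n}$ on the output side is overwhelmed by the freely prescribable failure ratios $R_m$.
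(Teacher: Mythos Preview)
Your opening sentence is exactly the paper's proof: the authors simply observe that $\ell_q(\cdot)$ and $\mathrm{RAD}(\cdot)$ are finitely determined sequence classes, note that $\mathrm{Rad}(\cdot)\subseteq\mathrm{RAD}(\cdot)$, and invoke \cite[Proposition~1.4]{botelhocampos} to obtain all the equivalences and the norm identity at once. So on that count you have reproduced the paper's argument verbatim.

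The self-contained cycle you then supply is a genuine alternative and is correct. The paper outsources everything to the abstract transference theorem; your direct proof trades that black box for an explicit gliding-hump construction in the (ii)$\Rightarrow$(i) step, with the shift-invariance of Rademacher norms and absolute summability in $\mathrm{Rad}(E_i)$ doing the work of assembling the blocks, and the $n$-th-power loss $\varepsilon_m^{\,n}$ on the output absorbed by the freely chosen ratios $R_m=2^{2mn}$. The payoff of your route is transparency (one sees exactly where completeness of $\mathrm{Rad}(E_i)$ and the finitely-determined nature of $\ell_q(F)$ enter), at the cost of repeating an argument that \cite{botelhocampos} carries out once and for all at the level of arbitrary sequence classes. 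Either approach suffices; the paper's is shorter, yours is more self-contained.
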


Applying the language and the notation of \cite{botelhocampos}, it is noticeable that $\ell_{p_1}(\cdot),\ldots,\ell_{p_n}(\cdot)$, $\ell_{q}(\cdot)$ and $\mathrm{RAD}(\cdot)$ are finitely determined sequence classes. As $\mathrm{Rad}(\cdot) \subseteq \mathrm{RAD}(\cdot)$, the equivalences in both theorems above are particular cases of \cite[Proposition 1.4]{botelhocampos}.

\begin{theorem}\label{teoideal} $\tau_{p_1,\ldots,p_n}$ is a Banach ideal and ${\cal C}^n_q$ is a $q$-Banach ideal of $n$-linear operators.
\end{theorem}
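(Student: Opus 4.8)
The plan is to recognize $\tau_{p_1,\ldots,p_n}$ and ${\cal C}^n_q$ as classes of multilinear operators defined by the transformation of vector‑valued sequences between finitely determined sequence classes, and to read off the ideal properties from the associated linear operators $\widetilde{T}$ supplied by Theorems~\ref{equivp} and~\ref{equivq}. By those theorems, for each choice of $E_1,\ldots,E_n,F$ the assignment $T\mapsto\widetilde{T}$ is a bijection between $\tau_{p_1,\ldots,p_n}(E_1,\ldots,E_n;F)$ and $\mathcal{L}(\ell_{p_1}(E_1),\ldots,\ell_{p_n}(E_n);\mathrm{RAD}(F))$ (respectively between ${\cal C}^n_q(E_1,\ldots,E_n;F)$ and $\mathcal{L}(\mathrm{RAD}(E_1),\ldots,\mathrm{RAD}(E_n);\ell_q(F))$) with $\|T\|_{\tau_{p_1,\ldots,p_n}}=\|\widetilde{T}\|$ and $\|T\|_{{\cal C}^n_q}=\|\widetilde{T}\|$. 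Since $T\mapsto T(x_1,\ldots,x_n)$ is linear in $T$ and $\widetilde{T}$ is built coordinatewise, $T\mapsto\widetilde{T}$ is linear; it is injective because $T(x_1,\ldots,x_n)$ is the first coordinate of $\widetilde{T}$ evaluated at the sequences $(x_i,0,0,\ldots)$. Hence each component of $\tau_{p_1,\ldots,p_n}$ and of ${\cal C}^n_q$ is a linear subspace, and $\|\cdot\|_{\tau_{p_1,\ldots,p_n}}$, $\|\cdot\|_{{\cal C}^n_q}$ are the pull‑backs, along these linear injections, of the operator norms on the corresponding spaces of continuous $n$‑linear maps: as $\mathrm{RAD}(F)$ is a Banach space the former is a norm, and as $\ell_q(F)$ is a $q$‑Banach space (a Banach space when $q\ge1$) the latter is a $q$‑norm. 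This puts us in the framework of \cite{botelhocampos}, and what remains is to verify the standing hypotheses of that theory for our particular sequence classes.

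For completeness, evaluating \eqref{multip} (resp.\ \eqref{multiq}) at a single vector ($k=1$) and using that $\mathrm{Rad}$ and $\ell_q$ of a one‑term sequence reduce to the norms of $F$ and of the $E_i$ gives $\|T\|\le\|T\|_{\tau_{p_1,\ldots,p_n}}$ (resp.\ $\|T\|\le\|T\|_{{\cal C}^n_q}$). Thus a Cauchy sequence in $\|\cdot\|_{\tau_{p_1,\ldots,p_n}}$ is $\|\cdot\|$‑Cauchy, hence converges in $\mathcal{L}(E_1,\ldots,E_n;F)$ to some $T$; passing to the limit in \eqref{multip} for fixed finitely many vectors — legitimate because $\|\cdot\|_{\mathrm{Rad}(F)}$ restricted to sequences of a fixed finite length depends continuously on the entries (finite determinacy of the sequence class) — shows $T$ has type $(p_1,\ldots,p_n)$ and $T_m\to T$ in $\|\cdot\|_{\tau_{p_1,\ldots,p_n}}$; the cotype case is identical. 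For part (ii) of the definition, given $A$ of type $(p_1,\ldots,p_n)$, $u_m\in\mathcal{L}(G_m;E_m)$ and $t\in\mathcal{L}(F;H)$, one checks the identity $\widetilde{t\circ A\circ(u_1,\ldots,u_n)}=\mathrm{Rad}(t)\circ\widetilde{A}\circ(\widehat{u}_1,\ldots,\widehat{u}_n)$, where $\widehat{u}_i\colon\ell_{p_i}(G_i)\to\ell_{p_i}(E_i)$, $(y_j)_j\mapsto(u_iy_j)_j$, and $\mathrm{Rad}(t)\colon\mathrm{Rad}(F)\to\mathrm{Rad}(H)$, $(z_j)_j\mapsto(tz_j)_j$, are bounded with $\|\widehat{u}_i\|\le\|u_i\|$ and $\|\mathrm{Rad}(t)\|\le\|t\|$ (the latter because $t$ is linear and so commutes with the Rademacher averages inside the $L_2$‑norm). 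Composing these bounds and invoking Theorem~\ref{equivp}(iii) gives $t\circ A\circ(u_1,\ldots,u_n)\in\tau_{p_1,\ldots,p_n}$ with the required norm estimate; replacing $\widehat{u}_i$ by $\mathrm{Rad}(u_i)\colon\mathrm{Rad}(G_i)\to\mathrm{Rad}(E_i)$ and $\mathrm{Rad}(t)$ by $\ell_q(t)\colon\ell_q(F)\to\ell_q(H)$, and invoking Theorem~\ref{equivq}(iii), does the same for ${\cal C}^n_q$.

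It remains to treat finite‑type operators and the normalization. For $I_n\colon\mathbb{K}^n\to\mathbb{K}$, note $\mathrm{Rad}(\mathbb{K})=\ell_2$ isometrically (orthonormality of the Rademacher functions in $L_2$), so \eqref{multip} for $I_n$ reads $\|(\lambda_j^{(1)}\cdots\lambda_j^{(n)})_{j=1}^k\|_2\le C\prod_{i=1}^n\|(\lambda_j^{(i)})_{j=1}^k\|_{p_i}$, and for $C=1$ this follows from H\"older's inequality (which yields the stronger estimate with the $\ell_r$‑norm on the left, $\frac1r=\sum_i\frac1{p_i}\ge\frac12$) together with the contractive inclusion $\ell_r\hookrightarrow\ell_2$; likewise \eqref{multiq} for $I_n$ reads $\|(\lambda_j^{(1)}\cdots\lambda_j^{(n)})_{j=1}^k\|_q\le M\prod_{i=1}^n\|(\lambda_j^{(i)})_{j=1}^k\|_2$, and for $M=1$ this follows from H\"older (giving the $\ell_{2/n}$‑norm on the left) and the inclusion $\ell_{2/n}\hookrightarrow\ell_q$, valid since $q\ge\frac2n$. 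Taking $k=1$ and all $\lambda^{(i)}=1$ shows the constants cannot be smaller than $1$, so $\|I_n\|_{\tau_{p_1,\ldots,p_n}}=\|I_n\|_{{\cal C}^n_q}=1$. An arbitrary $n$‑linear operator of finite type is a finite sum of operators $\varphi_1\otimes\cdots\otimes\varphi_n\otimes b=b\circ I_n\circ(\varphi_1,\ldots,\varphi_n)$ with $b\colon\mathbb{K}\to F$, $b(\lambda)=\lambda b$, hence lies in both classes by the ideal inequality just proved together with the subspace property. This checks all the requirements. I expect the only delicate points to be pinning down the exact value $\|I_n\|=1$ (choosing the right H\"older exponent and inclusion in each of the type and cotype cases) and the continuity‑of‑the‑norm step in the completeness argument, both of which are routine once the finitely determined sequence‑class formalism of \cite{botelhocampos} is in place.
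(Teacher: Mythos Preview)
Your approach is essentially the paper's: both recognize $\tau_{p_1,\ldots,p_n}$ and ${\cal C}^n_q$ as instances of the sequence-class summing framework of \cite{botelhocampos}, verify the scalar compatibility conditions $\ell_{p_1}\cdots\ell_{p_n}\stackrel{1}{\hookrightarrow}\ell_2=\mathrm{Rad}(\mathbb{K})$ and $\ell_2\cdots\ell_2\stackrel{1}{\hookrightarrow}\ell_q$ via H\"older plus the appropriate inclusion, and then read off the ideal axioms. The paper simply invokes \cite[Theorem~2.6]{botelhocampos} as a black box, whereas you unpack its proof (linearity and injectivity of $T\mapsto\widetilde T$, completeness by passing to the limit in the defining inequality, the ideal inequality via $\widehat{u}_i$ and $\mathrm{Rad}(t)$/$\ell_q(t)$, finite-type operators via factorization through $I_n$); this makes your argument more self-contained but not conceptually different.

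One correction: your claim that $T\mapsto\widetilde T$ is a \emph{bijection} onto $\mathcal{L}(\ell_{p_1}(E_1),\ldots,\ell_{p_n}(E_n);\mathrm{RAD}(F))$ (respectively onto $\mathcal{L}(\mathrm{RAD}(E_1),\ldots,\mathrm{RAD}(E_n);\ell_q(F))$) is false---the map is only an isometric linear injection, since a general $n$-linear operator on the sequence spaces need not act diagonally. Fortunately you never use surjectivity (your completeness argument is direct, not via closedness of the image), so this is a harmless misstatement; just replace ``bijection'' by ``linear isometric embedding''.
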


\begin{proof} Applying the language and the notation of \cite{botelhocampos} once again, it is clear that $\ell_{p_1}(\cdot),\ldots,\ell_{p_n}(\cdot)$ and $\mathrm{Rad}(\cdot)$ are linearly stable sequence classes. By Theorem \ref{equivp}, an operator $T\in\mathcal{L}(E_{1},\ldots,E_{n};F)$ has type $(p_1,\ldots,p_n)$ if and only if it is $\left(\ell_{p_1}(\cdot),\ldots,\ell_{p_n}(\cdot);\mathrm{Rad}(\cdot)\right)$-summing; and, by Theorem \ref{equivq}, $T$ has cotype $q$ if and only if it is $\left(\mathrm{Rad}(\cdot),\ldots,\mathrm{Rad}(\cdot);\ell_{q}(\cdot)\right)$-summing. Notice that
\[\ell_{p_1}(\mathbb{K}) \cdots \ell_{p_n}(\mathbb{K}) = \ell_{p_1} \cdots \ell_{p_n} \stackrel{1}{\hookrightarrow} \ell_2 = \mathrm{Rad}(\mathbb{K})\]
because $\frac12 \leq \frac{1}{p_1} + \cdots +\frac{1}{p_n}$, and
\[\mathrm{Rad}(\mathbb{K}) \cdots  \mathrm{Rad}(\mathbb{K}) = \ell_{2} \cdots \ell_{2} \stackrel{1}{\hookrightarrow} \ell_q = \ell_q(\mathbb{K})\]
because $q \geq \frac{2}{n}$. From \cite[Theorem 2.6]{botelhocampos} it follows that $\tau_{p_1,\ldots,p_n}$ is a Banach $n$-ideal and ${\cal C}^n_q$ is a  $q$-Banach $n$-ideal.\end{proof}

The linear ideals of operators of some proper type/cotype are always Banach operator ideals, whereas in the multilinear case we sometimes have quasi-Banach ideals, for instance, for $n \geq 3$ and $\frac2n \leq q < 1$, ${\cal C}_q^n$ is a $q$-Banach $n$-ideal.

We have presented examples of multilinear operators having any proper type/cotype (operators in Example \ref{ex1}(b) and (c), and finite type operators in Theorem \ref{teoideal}). Next we give non-trivial examples of operators having no proper type/cotype:

\begin{example}\rm \label{ex1}
(a) Consider the continuous bilinear operator
$$T \colon \ell_1 \times \ell_1 \longrightarrow \ell_1~, ~T\left( (\lambda_j)_{j=1}^\infty, (\eta_j)_{j=1}^\infty \right) = (\lambda_j\eta_j)_{j=1}^\infty.$$
Assuming that $T$ has some proper type $(p_1,p_2)$, there is a constant $C>0$ such that
\begin{align*}
k = \left\|\left(T(e_j,e_j) \right)_{j=1}^k \right\|_{{\rm Rad}(\ell_1)} 
\leq C \cdot \left( \sum_{j=1}^k \|e_j\|_{\ell_1}^{p_1}\right) ^{1/{p_1}} \cdot \left( \sum_{j=1}^k \|e_j\|_{\ell_1}^{p_2}\right) ^{1/{p_2}} = C \cdot k^{\left(1/{p_1} +  1/{p_2}\right)}
\end{align*}
for every $k \in \mathbb{N}$. This contradiction -- remember that $\frac{1}{p_1} + \frac{1}{p_2} < 1$ --  shows that $T$ has no proper type.

\noindent (b) Consider the continuous bilinear operator $T\colon \ c_0 \times c_0 \longrightarrow c_0$ defined in the same fashion as in (a). Using again that $T(e_j, e_j) = e_j$ for every $j$, assuming that $T$ has some proper cotype $q$, $1 \leq q < \infty$, there exists a constant $M>0$ such that $k^{1/q} \leq M$ for every $k \in \mathbb{N}$. This contradiction shows that $T$ has no proper cotype.
\end{example}

It is time to give examples of multilinear operators having some proper type/cotype but not having any proper type/cotype. To do so, consider the $(n+1)$-linear operator $\varphi_1 \otimes \cdots \otimes \varphi_{n} \otimes u \in \mathcal{L}(E_{1},\ldots,E_{n+1};F)$ given by
 \begin{equation}\varphi_1 \otimes \cdots \otimes \varphi_{n} \otimes u (x_1,\ldots,x_{n+1}) = \varphi_1(x_1)\cdots\varphi_{n}(x_{n})u(x_{n+1}),\label{oper}
 \end{equation}
where $0 \neq\varphi_i \in E_i^{'}$, $i=1,\ldots,n$, and $0 \neq u \in \mathcal{L}(E_{n+1};F)$. It is somewhat expected that $\varphi_1 \otimes \cdots \otimes \varphi_{n} \otimes u$ has a proper type $(p_1,\ldots,p_{n},p)$ whenever $u$ has type $1< p \leq 2$. Quite surprisingly, we shall see below that, actually, $\varphi_1 \otimes \cdots \otimes \varphi_{n} \otimes u$ has much more proper types than these ones. This result expresses a typically multilinear feature of the theory.

Given a linear operator $u \in {\cal L}(E;F)$, we define the usual parameter
$$p_u := \sup\{p : u {\rm ~has~type~} p\} \in [1,2].$$
It is well known that this supremum is not necessarily attained (see, e.g., \cite[p.\,305]{djt}).

\begin{proposition}\label{optensort} Let $\varphi_1 \otimes \cdots \otimes \varphi_{n} \otimes u$ be the  $(n+1)$-linear operator defined in {\rm (\ref{oper})}.\\
{\rm (a)} $\varphi_1 \otimes \cdots \otimes \varphi_{n} \otimes u$ has any proper type $(p_1,\ldots,p_{n+1})$ whenever $p_k = p_u$ for some $k \in \{1,\ldots,n+1\}$.\\
{\rm (b)} If $u$ has cotype $q \geq 2$, then $\varphi_1 \otimes \cdots \otimes \varphi_{n} \otimes u$ has cotype $\frac{2q}{2+qn}$.
\end{proposition}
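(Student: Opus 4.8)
The plan, for both parts, is to run the same three moves. First I would use the linearity of $u$ to factor
\[
\varphi_1\otimes\cdots\otimes\varphi_n\otimes u\,(x_1,\ldots,x_{n+1})=u\bigl(\varphi_1(x_1)\cdots\varphi_n(x_n)\,x_{n+1}\bigr),
\]
so that, writing $T:=\varphi_1\otimes\cdots\otimes\varphi_n\otimes u$ and $z_j:=\varphi_1(x_j^{(1)})\cdots\varphi_n(x_j^{(n)})\,x_j^{(n+1)}\in E_{n+1}$, one has $T(x_j^{(1)},\ldots,x_j^{(n+1)})=u(z_j)$. Second, I would feed the finite sequence $(z_j)_j$ into the linear type‑ (resp.\ cotype‑) inequality for $u$. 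Third, I would redistribute the scalar weights $\varphi_i(x_j^{(i)})$ among the several $\ell_{p_i}$‑ (resp.\ $\ell_2$‑) norms by a single application of the generalized H\"older inequality for sequences, and then absorb constants through $|\varphi_i(x_j^{(i)})|\le\|\varphi_i\|\,\|x_j^{(i)}\|$.

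For part (a), I would fix a proper type $(p_1,\ldots,p_{n+1})$ with $p_k=p_u$ for some $k$ (if $p_u=1$ there is nothing to prove, since the requirement $\sum_i\frac{1}{p_i}<1$ forbids a coordinate equal to $1$) and set $\frac{1}{r}:=\sum_{i=1}^{n+1}\frac{1}{p_i}$. Properness gives $1<r\le2$, and $\frac{1}{r}\ge\frac{1}{p_i}$ makes $p_1/r,\ldots,p_{n+1}/r$ admissible conjugate exponents. The point I would stress is that, because $n+1\ge2$, the sum defining $\frac{1}{r}$ contains at least one summand besides $\frac{1}{p_k}=\frac{1}{p_u}$, so $\frac{1}{r}>\frac{1}{p_u}$, i.e.\ $r<p_u$; hence $u$ has type $r$ (by the definition of $p_u$ together with Remark~\ref{firstex}) even though $p_u$ itself need not be attained — this is exactly what makes the ``extra'' types available. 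Then, for finitely many $(x_j^{(1)},\ldots,x_j^{(n+1)})$, the type‑$r$ inequality for $u$ followed by H\"older gives
\begin{align*}
\Bigl\|\bigl(T(x_j^{(1)},\ldots,x_j^{(n+1)})\bigr)_{j=1}^k\Bigr\|_{\mathrm{Rad}(F)}
&=\bigl\|(u(z_j))_{j=1}^k\bigr\|_{\mathrm{Rad}(F)}\le\|u\|_{\tau_r}\Bigl(\sum_{j=1}^k\|z_j\|^r\Bigr)^{1/r}\\
&\le\|u\|_{\tau_r}\prod_{i=1}^{n}\bigl\|(\varphi_i(x_j^{(i)}))_{j=1}^k\bigr\|_{p_i}\cdot\bigl\|(x_j^{(n+1)})_{j=1}^k\bigr\|_{p_{n+1}},
\end{align*}
and using $\|(\varphi_i(x_j^{(i)}))_j\|_{p_i}\le\|\varphi_i\|\,\|(x_j^{(i)})_j\|_{p_i}$ this is exactly inequality (\ref{multip}) with $C\le\|u\|_{\tau_r}\prod_{i=1}^n\|\varphi_i\|$, so $T$ has type $(p_1,\ldots,p_{n+1})$.

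For part (b), I would set $s:=\frac{2q}{2+qn}$, equivalently $\frac{1}{s}=\frac{n}{2}+\frac{1}{q}$; since $q\ge2$ and $n\ge1$ one gets $\frac{2}{n+1}\le s\le2$, so $s$ is a proper cotype for $(n+1)$‑linear operators, and the $n+1$ numbers $\frac{2}{s},\ldots,\frac{2}{s},\frac{q}{s}$ are all $\ge1$ and conjugate, because $n\cdot\frac{s}{2}+\frac{s}{q}=s\bigl(\frac{n}{2}+\frac{1}{q}\bigr)=1$. Writing $\lambda_j^{(i)}:=\varphi_i(x_j^{(i)})$ and $v_j:=u(x_j^{(n+1)})$, H\"older with those exponents would give
\begin{align*}
\Bigl\|\bigl(T(x_j^{(1)},\ldots,x_j^{(n+1)})\bigr)_{j=1}^k\Bigr\|_{s}
&=\Bigl(\sum_{j=1}^k|\lambda_j^{(1)}|^{s}\cdots|\lambda_j^{(n)}|^{s}\,\|v_j\|^{s}\Bigr)^{1/s}\\
&\le\prod_{i=1}^n\bigl\|(\lambda_j^{(i)})_{j=1}^k\bigr\|_2\cdot\bigl\|(v_j)_{j=1}^k\bigr\|_q.
\end{align*}
For the last factor I would invoke the linear cotype‑$q$ inequality for $u$, $\|(v_j)_j\|_q\le\|u\|_{{\cal C}_q}\,\|(x_j^{(n+1)})_j\|_{\mathrm{Rad}(E_{n+1})}$; for each of the others I would use the orthonormality of the Rademacher functions in $L_2$:
\[
\sum_{j}|\varphi_i(x_j^{(i)})|^2=\int_0^1\Bigl|\varphi_i\Bigl(\sum_j r_j(t)\,x_j^{(i)}\Bigr)\Bigr|^2dt\le\|\varphi_i\|^2\int_0^1\Bigl\|\sum_j r_j(t)\,x_j^{(i)}\Bigr\|^2dt=\|\varphi_i\|^2\,\|(x_j^{(i)})_j\|_{\mathrm{Rad}(E_i)}^2.
\]
Combining these yields (\ref{multiq}) with cotype parameter $s$ and $M\le\|u\|_{{\cal C}_q}\prod_{i=1}^n\|\varphi_i\|$, proving that $T$ has cotype $\frac{2q}{2+qn}$.

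The points that really need care are modest: in (a), the strict inequality $r<p_u$, which is what renders the non‑attainment of the supremum $p_u$ irrelevant; and in (b), choosing the H\"older exponents so that the $n$ functional slots receive exactly $\ell_2$‑norms (so the Rademacher‑orthogonality bound applies there) and the $u$‑slot an $\ell_q$‑norm — a choice that forces precisely the value $s=\frac{2q}{2+qn}$ and makes $q\ge2$ exactly the condition for this $s$ to be a proper cotype.
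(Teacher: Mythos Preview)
Your proof is correct and follows essentially the same route as the paper's: in (a) you set $\frac{1}{r}=\sum_i\frac{1}{p_i}$, observe $r<p_u$ so that $u$ has type $r$, and then combine the linear type inequality with H\"older; in (b) you set $\frac{1}{s}=\frac{n}{2}+\frac{1}{q}$, split via H\"older into $n$ factors in $\ell_2$ and one in $\ell_q$, and bound the latter by the cotype of $u$ and the former by Rademacher orthogonality. The only differences are expository---you spell out the orthogonality step and the edge case $p_u=1$ more explicitly than the paper does.
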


\begin{proof}(a) Let $(x_j^{(i)})_{j=1}^\infty \in \ell_{p_i}(E_i)$ for $i=1,\ldots,n+1$, be given. Defining $p$ by \begin{equation}\label{eqq}
\frac{1}{p} = \frac{1}{p_1}+ \cdots + \frac{1}{p_{n+1}},
\end{equation}
since $\left(\varphi_i(x_j^{(i)})\right)_{j=1}^\infty \in \ell_{p_i}$ for $i=1,\ldots,n$, from H\"older's inequality we obtain
\[\left(\varphi_1(x_j^{(1)}) \cdots \varphi_n(x_j^{(n)})x_j^{(n+1)}\right)_{j=1}^\infty \in \ell_{p}(E_{n+1}).\]
As $p_k = p_u$ for some $k \in \{1,\ldots,n+1\}$, from (\ref{eqq}) it follows that $p < p_u$. So $u$ has type $p$, therefore
\begin{align*}
 \left(\varphi_1 \otimes \cdots  \otimes \varphi_{n} \otimes u  (x_j^{(1)},\ldots,x_j^{(n+1)})\right)_{j=1}^\infty & =\left(\varphi_1(x_j^{(1)}) \cdots \varphi_n(x_j^{(n)})u(x_j^{(n+1)}
)\right)_{j=1}^\infty \\
&=\left(u\left(\varphi_1(x_j^{(1)}) \cdots \varphi_n(x_j^{(n)})x_j^{(n+1)}
\right)\right)_{j=1}^\infty \in \mathrm{Rad}(F).
\end{align*}
(b) Set $\overline{q} := \frac{2q}{2+qn}$ and let $k \in \mathbb{N},~ x_j^{(i)} \in E_i$, $i = 1, \ldots, n, j = 1, \ldots, k$ be given.
As $$\frac{1}{\overline{q}} = \frac{2+qn}{2q} = \frac{n}{2} + \frac{1}{q},$$ we obtain again from H\"older's inequality,
\begin{align*}
 &\left(\sum_{j=1}^k \left\| \varphi_1(x_j^{(1)})\cdots\varphi_{n}(x_j^{(n)})u(x_j^{(n+1)}
)\right\|^{\overline{q}}\right)^{1/{\overline{q}}}\\
& \leq  \left( \sum_{j=1}^k|\varphi_1(x_j^{(1)})|^2\right)^{1/2} \cdots \left( \sum_{j=1}^k|\varphi_{n}(x_j^{(n)})|^2\right)^{1/2}\cdot \left( \sum_{j=1}^k\|u(x_j^{(n+1)}) \|^q \right)^{1/q}\\
& \leq \|u\|_{C_{q}}\cdot \|\varphi_1\| \cdots \|\varphi_{n}\|\cdot \prod_{i=1}^{n+1} \left(\int_0^1 \left\Vert \sum_{j=1}^k r_j(t)x_j^{(i)} \right\Vert^2 dt\right)^{1/2}.
\end{align*}
\end{proof}

Our next aim is to provide multilinear operators having some proper types/cotypes but not having any proper type/cotype, and to show that the cotype computed in Proposition \ref{optensort}(b) is optimal in general.
Some notation and definitions are needed. We say that a bounded sequence $\left(y_j\right)_{j=1}^\infty$ in a Banach space $E$ is \emph{seminormalized} if $\inf\limits_j \|y_j\| >0$.
\begin{definition}\rm
Let $1 \leq p,q < \infty$. We say that a Banach space $E$ has the \emph{sign lower $p$-estimate property} (respectively the \emph{sign upper $q$-estimate property}) if there exist a seminormalized sequence $\left(y_j\right)_{j=1}^\infty$ in $E$ and a constant $R>0$ such that, for all $k \in \mathbb{N}$,
\begin{equation}\label{rep}
R k^{1/p} \leq \left\|\sum_{j=1}^k \varepsilon_jy_j\right\|_E, \ \ \left(\mathrm{respectively\ \ }\left\|\sum_{j=1}^k \varepsilon_jy_j\right\|_E \leq R k^{1/q}\right)\ \ \mathrm{for\ all\ }\varepsilon_j = \pm 1.
\end{equation}
\end{definition}

Let us see that the properties above are easy to be found:

\begin{example}\rm (a) We take advantage of the properties $S_q$ and $T_p$ introduced by Knaust and Odell in \cite{knaust} and developed by many authors, see, e.g., \cite{castillo, dimant, gonzalo}. Let $E$ be a reflexive Banach space. If $E$ has  property $S_q$ then $E$ has the sign upper $q$-estimate property, and if $E$ has property $T_p$ then $E$ has the sign lower $p$-estimate property. For instance, $\ell_r$ spaces, $1 < r < \infty$, have the sign upper (and lower) $r$-estimate property.\\
(b) For further concrete examples, let $\mu$ be a measure for which there exists a sequence of pairwise disjoint mensurable sets $(A_n)_{n=1}^\infty$ with $0 < \mu(A_n) < \infty$ for all $n$ and $\sum_{n=1}^\infty \mu(A_n) < \infty$. Given a Banach space $X$, choose   $x \in X$ with $\|x\| = 1$. Working with the sequence $f_n = \frac{1}{\mu(A_n)^{1/r}}\cdot \mathcal{X}_{A_n}\cdot x$, $n \in \mathbb{N}$, we see that the Lebesgue-Bochner space $L_r(\mu, X)$ has the sign upper/lower $r$-estimate properties for every $1 \leq r < \infty$. In particular, the Lebesgue spaces $L_r[a,b]$ have the sign upper/lower $r$-estimate properties.
\end{example}

Now we are ready to achieve the announced task:

\begin{proposition}\label{optimal}
Let $\varphi_1 \in E_1', \ldots, \varphi_{n} \in E_n'$ and $E_{n+1}$ be a Banach space.\\
{\rm (a)} If $E_{n+1}$ has type $1< p \leq 2$ and the sign lower $p$-estimate property, then $\varphi_1 \otimes \cdots \otimes \varphi_{n} \otimes id_{E_{n+1}}$ has any proper type $(p_1,\ldots,p_{n+1})$ whenever $p_k = p$ for some $k \in \{1,\ldots,n+1\}$, and has no proper type $(r_1,\ldots,r_{n+1})$ with $\frac{1}{2} \leq \frac{1}{r_1}+ \cdots + \frac{1}{r_{n+1}} < \frac{1}{p}$.\\
{\rm (b)} If $E_{n+1}$ has cotype $q \geq2$ and the sign upper $q$-estimate property, then
$$\frac{2q}{2+qn} = \min \left\{r : \varphi_1 \otimes \cdots \otimes \varphi_{n} \otimes id_{E_{n+1}}\ \mathrm{has\ cotype}\ r\right\}.$$
\end{proposition}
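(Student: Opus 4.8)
The plan is to treat each item as a positive half (the operator $T:=\varphi_1\otimes\cdots\otimes\varphi_n\otimes id_{E_{n+1}}$ does have the asserted type/cotype) and a negative half (it has no better one). I would obtain the positive halves from Proposition~\ref{optensort}, and the negative halves from the sign estimates, always by the same device: evaluate the inequality of Definition~\ref{defp} on a ``diagonal'' made of fixed vectors on the first $n$ coordinates and of the sign-estimate sequence $(y_j)$ on the last one, and then let the number $k$ of vectors grow while comparing powers of $k$.

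For the positive half of (a) I would argue as follows. Since $E_{n+1}$ has type $p$, the identity $id_{E_{n+1}}$ has (linear) type $p$, hence type $s$ for every $1<s<p$. Given a proper type $(p_1,\dots,p_{n+1})$ with $p_k=p$ for some $k$, put $\frac{1}{s}=\frac{1}{p_1}+\cdots+\frac{1}{p_{n+1}}$; as $n+1\ge 2$ and all summands are positive, $\frac{1}{s}>\frac{1}{p_k}=\frac{1}{p}$, and as the type is proper $\frac1s<1$, so $s\in(1,p)$ and $id_{E_{n+1}}$ has type $s$. Then the computation in the proof of Proposition~\ref{optensort}(a), which only uses that the last factor has type $s$, applies verbatim with $u=id_{E_{n+1}}$: for $(x_j^{(i)})_j\in\ell_{p_i}(E_i)$, H\"older's inequality puts $\big(\varphi_1(x_j^{(1)})\cdots\varphi_n(x_j^{(n)})x_j^{(n+1)}\big)_j$ in $\ell_s(E_{n+1})$, so its image under $id_{E_{n+1}}$, namely $\big(T(x_j^{(1)},\dots,x_j^{(n+1)})\big)_j$, lies in $\mathrm{Rad}(E_{n+1})$, and Theorem~\ref{equivp} gives type $(p_1,\dots,p_{n+1})$. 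For the positive half of (b) I would simply invoke Proposition~\ref{optensort}(b) with $u=id_{E_{n+1}}$ (which has cotype $q\ge 2$): it gives that $T$ has cotype $\overline q:=\frac{2q}{2+qn}$; one checks $\overline q\ge\frac{2}{n+1}$ (this is equivalent to $q\ge2$), so $\overline q$ is a proper cotype, and by Remark~\ref{firstex} the set $\{r:T\text{ has cotype }r\}$ is upward closed and contains $\overline q$. It then remains only to exclude cotypes $r<\overline q$.

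For the negative halves I would fix $a_i\in E_i$ with $\varphi_i(a_i)=1$ for $i=1,\dots,n$ (possible since $\varphi_i\neq 0$), take $(y_j)_j$ and $R>0$ from the relevant sign estimate for $E_{n+1}$, and test on $x_j^{(i)}=a_i$ ($i\le n$, $j\le k$), $x_j^{(n+1)}=y_j$, so that $T(x_j^{(1)},\dots,x_j^{(n+1)})=y_j$. In case (a), assuming $T$ had type $(r_1,\dots,r_{n+1})$ with $\frac{1}{r_1}+\cdots+\frac{1}{r_{n+1}}<\frac{1}{p}$: the left side of (\ref{multip}) equals $\big\|(y_j)_{j=1}^k\big\|_{\mathrm{Rad}(E_{n+1})}\ge Rk^{1/p}$ because the Rademacher functions take values $\pm1$ a.e.\ and the sign lower $p$-estimate applies pointwise, while the right side is $C\prod_{i=1}^n\|a_i\|k^{1/r_i}\cdot\big(\sum_{j=1}^k\|y_j\|^{r_{n+1}}\big)^{1/r_{n+1}}\le C'k^{1/r_1+\cdots+1/r_{n+1}}$ by boundedness of $(y_j)$; letting $k\to\infty$ contradicts $\sum 1/r_i<1/p$. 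In case (b), assuming $T$ had cotype $r<\overline q$: the left side of (\ref{multiq}) is $\big\|(y_j)_{j=1}^k\big\|_{\ell_q}$-analogue $\big\|(y_j)_{j=1}^k\big\|_{q}$ replaced here by $\big\|(y_j)_{j=1}^k\big\|_{r}\ge\big(\inf_j\|y_j\|\big)k^{1/r}$ (using seminormalization), while on the right each of the first $n$ factors equals $\|a_i\|\cdot\big\|(1)_{j=1}^k\big\|_{\mathrm{Rad}(\mathbb K)}=\|a_i\|k^{1/2}$ (orthonormality of the Rademachers) and the last equals $\big\|(y_j)_{j=1}^k\big\|_{\mathrm{Rad}(E_{n+1})}\le Rk^{1/q}$ by the sign upper $q$-estimate; hence $\big(\inf_j\|y_j\|\big)k^{1/r}\le M'k^{n/2+1/q}=M'k^{1/\overline q}$, impossible as $k\to\infty$ since $r<\overline q$. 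Combining with the positive halves proves both (a) and (b).

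I do not expect a serious obstacle here: this is the standard ``diagonal test plus exponent count''. The points that need a little care are that rescaling the $\varphi_i$ and using a merely seminormalized (not normalized) sequence $(y_j)$ still yield the required one-sided bounds; that the Rademacher functions take values $\pm1$ almost everywhere, which is exactly what transfers the sign estimates to $\mathrm{Rad}$-norms; and the identity $\frac{n}{2}+\frac{1}{q}=\frac{1}{\overline q}$ already recorded in the proof of Proposition~\ref{optensort}(b), which is what makes the right-hand exponent in (b) equal $\overline q$ precisely and hence gives optimality rather than just an upper estimate.
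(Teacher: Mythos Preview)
Your proposal is correct and follows essentially the same approach as the paper: the positive halves are obtained from Proposition~\ref{optensort}, and the negative halves by testing on the diagonal $(a_1,\dots,a_n,y_j)$ and comparing powers of $k$ via the sign estimates. The only cosmetic differences are that you normalize $\varphi_i(a_i)=1$ (the paper carries the factor $|\varphi_i(a_i)|$), you treat general $n$ directly (the paper writes out $n=1$ and leaves the rest to the reader), and you do not separate the case $q=2$ in (b) (which is harmless, since your exponent comparison still goes through there, the negative half being vacuous).
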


\begin{proof} (a) According to Proposition \ref{optensort}, the operator has such proper types $(p_1,\ldots,p_{n+1})$. For simplicity, we shall prove what is left in the bilinear case ($n=1$). Suppose that $E_{2}$ has type $p \leq 2$ and the sign lower $p$-estimate. Let $r_1,r_2$ be such that $\frac{1}{2} \leq \frac{1}{r_1}+ \frac{1}{r_2} < \frac{1}{p}$. Let $(y_j)_{j=1}^\infty$ be a seminormalized sequence in $E_2$ and $R>0$ be as in the definition of the sign lower $p$-estimate property. Say $\|y_j\| \leq K$ for every $j$. Choose $a \in E_1$ such that $\varphi_1(a) \neq 0$. Assuming that $\varphi_1 \otimes id_{E_{2}}\colon E_1 \times E_2 \longrightarrow E_2$ has type $(r_1,r_2)$, there exists a constant $C>0$ such that
\begin{align*} \left|\varphi_1(a)\right|Rk^{1/p}& =\left|\varphi_1(a)\right| \left(\frac{1}{2^n}\left[2^n \left(Rk^{1/p} \right)^2\right]\right)^{1/2} \leq \left|\varphi_1(a)\right| \left(\frac{1}{2^n}\cdot\sum_{\varepsilon_j = \pm 1}\left\|\sum_{j=1}^k \varepsilon_j y_j \right\|^2 \right)^{1/2}\\&= \left|\varphi_1(a)\right| \left(\int_0^1 \left\Vert \sum_{j=1}^k r_j(t)y_j \right\Vert^2 dt\right)^{1/2} = \left(\int_0^1 \left\Vert \sum_{j=1}^k r_j(t)\varphi_1(a)y_j \right\Vert^2 dt\right)^{1/2}\\& = \left(\int_0^1 \left\Vert \sum_{j=1}^k r_j(t)\varphi_1 \otimes id_{E_2}(a,y_j) \right\Vert^2 dt\right)^{1/2}\\& \leq C \left( \sum_{j=1}^k \|a\|^{r_1}\right) ^{1/{r_1}}\cdot  \left( \sum_{j=1}^k \|y_j\|^{r_2}\right) ^{1/{r_2}} \leq CK \cdot \|a\|\cdot k^{1/{r_1}+1/{r_2}}
\end{align*}
for every $k \in \mathbb{N}$ -- a contradiction because $ \frac{1}{r_1}+ \frac{1}{r_2} < \frac{1}{p}$.\\
\noindent (b) According to Proposition \ref{optensort}, the operator has cotype $\frac{2q}{2+qn}$. Observe that there is nothing else to do if $q = 2$. Indeed, in this case $\frac{2q}{2+qn} = \frac{2}{n+1}$ which is the best possible cotype $\varphi_1 \otimes \cdots \otimes \varphi_{n} \otimes id_{E_{n+1}}$ can have, a cotype we already know it has. Assume that $E_{n+1}$ has cotype $q > 2$. Let $\overline{q} = \frac{2q}{2+qn}$ and $1 \leq r < \overline{q}$. Choose $a_j \in E_j$ such that $\varphi_j(a_j)\neq 0$, $j = 1, \ldots, n$. Let $(y_j)_{j=1}^\infty$ be a seminormalized sequence in $E_{n+1}$ and $R>0$ be as in the definition of the sign upper $p$-estimate property. Assuming that $\varphi_1 \otimes \cdots \otimes \varphi_{n} \otimes id_{E_{n+1}}$ has cotype $r$ with constant $C$, a computation similar to the one performed in the proof (a) yields that
$$|\varphi_1(a_1)| \cdots |\varphi_n(a_n)| \cdot \inf_j \|y_j\|\cdot k^{1/r} \leq CR\cdot \|a_1\|\cdots \|a_n\|\cdot k^{n/2+ 1/q}$$
for every $k \in \mathbb{N}$. As $\frac{n}{2} + \frac{1}{q} = \frac{1}{\overline{q}} < \frac{1}{r}$, this is absurd.
\end{proof}

In part (a) of the proposition above, for $1 < p < 2$ it is always possible to choose $r_1,\ldots,r_{n+1}$ with $\frac{1}{2} \leq \frac{1}{r_1}+ \cdots + \frac{1}{r_{n+1}} < \frac{1}{p}$, and then $\varphi_1 \otimes \cdots \otimes \varphi_{n} \otimes id_{E_{n+1}}$ fails to have some proper type $(r_1, \ldots, r_{n+1})$.

\bigskip

\noindent {\bf Open question 1.} Under the assumptions of Proposition \ref{optimal}(a), does $\varphi_1 \otimes \cdots \otimes \varphi_{n} \otimes id_{E_{n+1}}$ have some/any proper type $(r_1,\ldots,r_{n+1})$  with $\frac{1}{p} \leq \frac{1}{r_1} + \cdots + \frac{1}{r_{n+1}}$?

\section{Inclusion relationships with related multi-ideals}

\hspace{0,6cm}  In this section we establish the relationships of the multi-ideals $\tau_{p_1,\ldots,p_n}$ and ${\cal C}^n_q$ with the multi-ideals generated by the standard methods of generalizing a given linear operator ideal to the multilinear setting (see, e.g. \cite{note,pietsch83}), starting with the ideals $\tau_p$ of linear operators of type $p$ and ${\cal C}_q$ of linear operators of cotype $q$. Some definitions and notations are needed now.

For given operator ideals $\mathcal{I},\mathcal{I}_1,\ldots, \mathcal{I}_n$, and a mapping $T \in \mathcal{L}\left(E_{1},\ldots,E_{n};F\right)$, the following constructions are standard:\\
$\bullet$ (Factorization method) $T$ is called to be of type $\mathcal{L}(\mathcal{I}_1,\ldots, \mathcal{I}_n)$, and in this case we write $T \in \mathcal{L}(\mathcal{I}_1,\ldots, \mathcal{I}_n)\left(E_{1},\ldots,E_{n};F\right)$, if there are Banach spaces $G_1,\ldots,G_n$, a continuous $n$-linear operator $B \in \mathcal{L}\left(E_{1},\ldots,E_{n};F\right)$ and linear operators $u_i \in \mathcal{I}_i\left(E_{i};G_i\right)$, $i = 1,\ldots,n$, such that $T=B(u_1,\ldots,u_n)$. If $\mathcal{I}_1,\ldots, \mathcal{I}_n$ are normed operator ideals, we define
\[ \|T\|_{\mathcal{L}(\mathcal{I}_1,\ldots, \mathcal{I}_n)}= \inf \|B\|\cdot\|u_1\|_{\mathcal{I}_1}\cdots \|u_n\|_{\mathcal{I}_n},\]
taking the infimum over all possible factorizations of $T$ in this fashion above.\\
$\bullet$ (Linearization method) $T$ is said to be of type $[\mathcal{I}_1,\ldots, \mathcal{I}_n]$, and in this case we write $T \in [\mathcal{I}_1,\ldots, \mathcal{I}_n]\left(E_{1},\ldots,E_{n};F\right)$, if $I_i(T)$ belongs to $\mathcal{I}_i(E_i;\mathcal{L}(E_{1},\overset{[i]}{\ldots},E_{n};F))$ for $i= 1,\ldots,n$, where the operator $I_i \colon \mathcal{L}\left(E_{1},\ldots,E_{n};F\right) \longrightarrow \mathcal{L}(E_i;\mathcal{L}(E_{1},\overset{[i]}{\ldots},E_{n};F))$ is given by $$I_i(T)(x_i)(x_1,\overset{[i]}{\ldots},x_n)=T(x_1,\ldots,x_n),$$
and $\overset{[i]}{\ldots}$ means that the $i$-th coordinate is not involved. If $\mathcal{I}_1,\ldots, \mathcal{I}_n$ are normed ideals, define
$$ \|T\|_{[\mathcal{I}_1,\ldots, \mathcal{I}_n]}= \max \left\{\|I_1(T)\|_{\mathcal{I}_1},\ldots ,\|I_n(T)\|_{\mathcal{I}_n}\right\}.$$
$\bullet$ (Composition ideals) $T$ belongs to $\mathcal{I} \circ \mathcal{L}$, denoted $T \in \mathcal{I} \circ \mathcal{L}\left(E_{1},\ldots,E_{n};F\right)$, if there are a Banach space $G$, an operator $u \in \mathcal{I}(G;F)$ and an $n$-linear mapping $B \in \mathcal{L}\left(E_{1},\ldots,E_{n};G\right)$ such that $T=u \circ B$. If $\mathcal{I}$ is a normed ideal, define
\[\|T\|_{\mathcal{I} \circ \mathcal{L}}=\inf \left\{\|B\|\cdot\|u\|_\mathcal{I} : T=u \circ B,\, u \in \mathcal{I}(G;F) \mathrm{\ and\ } B \in \mathcal{L}\left(E_{1},\ldots,E_{n};G\right) \right\}.\]

It is well-known that if $\mathcal{I}$ and $\mathcal{I}_1,\ldots, \mathcal{I}_n$ are normed (Banach) operator ideals, then $\left(\mathcal{L}(\mathcal{I}_1,\ldots, \mathcal{I}_n), \|\cdot\|_{\mathcal{L}(\mathcal{I}_1,\ldots, \mathcal{I}_n)}\right)$ is a quasi-normed (quasi-Banach) multi-ideal and both \\$\left([\mathcal{I}_1,\ldots, \mathcal{I}_n], \|\cdot\|_{[\mathcal{I}_1,\ldots, \mathcal{I}_n]}\right)$ and $\left(\mathcal{I} \circ \mathcal{L}, \|\cdot\|_{\mathcal{I} \circ \mathcal{L}}\right)$ are normed (Banach) multi-ideals. It is also known that $\mathcal{L}(\mathcal{I}_1,\ldots, \mathcal{I}_n)$ $\subseteq [\mathcal{I}_1,\ldots, \mathcal{I}_n]$.

\begin{theorem}\label{inclus} The following inclusion relations hold for the multi-ideals of operators of proper type $(p_1,\ldots,p_n)$ and proper cotype $q$:\\
{\rm (i)} If  $q \geq 2$, then $\mathcal{C}_q \circ \mathcal{L} \subseteq {\cal C}_q^n$ and $\|T\|_{{\cal C}_q^n} \leq \|T\|_{\mathcal{C}_q \circ \mathcal{L}}$.\\
{\rm (ii)} If $\frac{1}{p} \leq \frac{1}{p_1}+ \cdots + \frac{1}{p_n}$, then $\tau_p \circ \mathcal{L} \subseteq \tau_{p_1,\ldots,p_n}$ and $\|T\|_{\tau_{p_1,\ldots,p_n}} \leq \|T\|_{\tau_p \circ \mathcal{L}}$.\\
{\rm (iii)} If $\frac{1}{q} \leq \frac{1}{q_1}+ \cdots + \frac{1}{q_n}$, then  $\mathcal{L}\left({\cal C}_{q_1},\ldots,{\cal C}_{q_n}\right) \subseteq {\cal C}_q^n$ and $\|T\|_{{\cal C}_q^n} \leq \|T\|_{\mathcal{L}\left({\cal C}_{q_1},\ldots,{\cal C}_{q_n}\right)}$.
\end{theorem}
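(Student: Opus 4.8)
The plan is to derive all three inclusions directly from the defining inequalities~(\ref{multip}) and~(\ref{multiq}) of Definition~\ref{defp} -- in particular from their $n=1$ instances, which are the defining inequalities of the linear ideals $\tau_p$ and $\mathcal{C}_q$ -- together with the generalized H\"older inequality; item (i) will need, in addition, a separate lemma describing how continuous multilinear operators transform Rademacher-summable sequences.

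Items (iii) and (ii) present no real difficulty. For (iii), write $T=B(u_1,\ldots,u_n)$ with $u_i\in\mathcal{C}_{q_i}(E_i;G_i)$ and $B\in\mathcal{L}(G_1,\ldots,G_n;F)$. Given finitely many $x_j^{(i)}\in E_i$, since $\|B(v_1,\ldots,v_n)\|\le\|B\|\,\|v_1\|\cdots\|v_n\|$ we get $\|T(x_j^{(1)},\ldots,x_j^{(n)})\|\le\|B\|\prod_{i=1}^n\|u_i(x_j^{(i)})\|$, hence
\[
\left\|\left(T(x_j^{(1)},\ldots,x_j^{(n)})\right)_{j=1}^k\right\|_q \le \|B\|\left(\sum_{j=1}^k\prod_{i=1}^n\left\|u_i(x_j^{(i)})\right\|^q\right)^{1/q} \le \|B\|\prod_{i=1}^n\left\|\left(u_i(x_j^{(i)})\right)_{j=1}^k\right\|_{q_i},
\]
the last step being H\"older with exponents $q_i/q$, which are admissible precisely because $\frac1q\le\frac1{q_1}+\cdots+\frac1{q_n}$. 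Applying the definition of linear cotype $q_i$ to each $u_i$ bounds the right-hand side by $\|B\|\cdot\prod_i\|u_i\|_{\mathcal{C}_{q_i}}\cdot\prod_i\|(x_j^{(i)})_{j=1}^k\|_{\mathrm{Rad}(E_i)}$, and the infimum over all factorizations yields (iii). Item (ii) is analogous: write $T=u\circ B$ with $u\in\tau_p(G;F)$ and $B\in\mathcal{L}(E_1,\ldots,E_n;G)$, so that by the definition of linear type $p$, $\left\|\left(T(x_j^{(1)},\ldots,x_j^{(n)})\right)_{j=1}^k\right\|_{\mathrm{Rad}(F)}\le\|u\|_{\tau_p}\left\|\left(B(x_j^{(1)},\ldots,x_j^{(n)})\right)_{j=1}^k\right\|_p$. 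Put $\frac1r:=\frac1{p_1}+\cdots+\frac1{p_n}$; the hypothesis $\frac1p\le\frac1r$ gives $r\le p$, so $\|\cdot\|_p\le\|\cdot\|_r$ on finite sequences and
\[
\left\|\left(B(x_j^{(1)},\ldots,x_j^{(n)})\right)_{j=1}^k\right\|_p \le \left\|\left(B(x_j^{(1)},\ldots,x_j^{(n)})\right)_{j=1}^k\right\|_r \le \|B\|\prod_{i=1}^n\left\|\left(x_j^{(i)}\right)_{j=1}^k\right\|_{p_i}
\]
by H\"older with exponents $p_i/r$, which now sum to exactly $1$ by the choice of $r$; passing to the infimum over factorizations finishes it.

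Item (i) is the one genuinely different case: in $T=u\circ B$ the multilinear factor $B$ is now innermost, so one must bound $\left\|\left(B(x_j^{(1)},\ldots,x_j^{(n)})\right)_{j=1}^k\right\|_{\mathrm{Rad}(G)}$ by the $\mathrm{Rad}$-norms of the coordinate sequences. I would isolate this as a lemma: every continuous $B\in\mathcal{L}(E_1,\ldots,E_n;G)$ satisfies
\[
\left\|\left(B(x_j^{(1)},\ldots,x_j^{(n)})\right)_{j=1}^k\right\|_{\mathrm{Rad}(G)} \le \|B\|\prod_{i=1}^n\left\|\left(x_j^{(i)}\right)_{j=1}^k\right\|_{\mathrm{Rad}(E_i)},
\]
i.e.\ $B$ induces a continuous $n$-linear map $\mathrm{Rad}(E_1)\times\cdots\times\mathrm{Rad}(E_n)\to\mathrm{Rad}(G)$ of norm $\le\|B\|$. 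I would prove this by induction on $n$ (the case $n=1$ being immediate from the linearity of $B$): viewing $B$ as $\beta\in\mathcal{L}(E_1;\mathcal{L}(E_2,\ldots,E_n;G))$ with $\|\beta\|=\|B\|$, and using two independent Rademacher sequences $(\varepsilon_j)$, $(\varepsilon_j')$, the identity $\beta(x_j^{(1)})=\mathbb{E}_{\varepsilon'}\big[\varepsilon_j'\,\beta\big(\sum_i\varepsilon_i'x_i^{(1)}\big)\big]$ yields $\sum_j\varepsilon_j\,B(x_j^{(1)},\ldots,x_j^{(n)})=\mathbb{E}_{\varepsilon'}\big[\sum_j\varepsilon_j\varepsilon_j'\,S_{\varepsilon'}(x_j^{(2)},\ldots,x_j^{(n)})\big]$, where $S_{\varepsilon'}:=\beta\big(\sum_i\varepsilon_i'x_i^{(1)}\big)$ satisfies $\|S_{\varepsilon'}\|\le\|B\|\,\big\|\sum_i\varepsilon_i'x_i^{(1)}\big\|$; then Jensen's inequality in $\varepsilon'$, the fact that $(\varepsilon_j\varepsilon_j')_j$ is again a Rademacher sequence for each fixed $\varepsilon'$, and the induction hypothesis applied to the $(n-1)$-linear operator $S_{\varepsilon'}$ close the estimate. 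Granted the lemma, (i) follows at once: for $T=u\circ B$ with $u\in\mathcal{C}_q(G;F)$ (the hypothesis $q\ge2$ being just the range in which $\mathcal{C}_q$ is a nontrivial operator ideal), the cotype of $u$ and the lemma give
\[
\left\|\left(T(x_j^{(1)},\ldots,x_j^{(n)})\right)_{j=1}^k\right\|_q \le \|u\|_{\mathcal{C}_q}\left\|\left(B(x_j^{(1)},\ldots,x_j^{(n)})\right)_{j=1}^k\right\|_{\mathrm{Rad}(G)} \le \|u\|_{\mathcal{C}_q}\|B\|\prod_{i=1}^n\left\|\left(x_j^{(i)}\right)_{j=1}^k\right\|_{\mathrm{Rad}(E_i)},
\]
and the infimum over factorizations gives $\|T\|_{{\cal C}_q^n}\le\|T\|_{\mathcal{C}_q\circ\mathcal{L}}$.

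The main obstacle is therefore the Rademacher-transformation lemma underlying (i). The naive route -- comparing $\sum_j\varepsilon_j\,B(x_j^{(1)},\ldots,x_j^{(n)})$ with the fully decoupled Rademacher chaos $B\big(\sum_j\varepsilon_j^{(1)}x_j^{(1)},\ldots,\sum_j\varepsilon_j^{(n)}x_j^{(n)}\big)$ -- breaks down because the off-diagonal terms of that chaos need not be controlled in a general Banach-space target; the point of the conditional-expectation representation above is precisely to sidestep that comparison. By contrast (ii) and (iii) are routine once the right H\"older exponents are spotted, and the hypotheses $\frac1p\le\frac1{p_1}+\cdots+\frac1{p_n}$ and $\frac1q\le\frac1{q_1}+\cdots+\frac1{q_n}$ are exactly the admissibility conditions for those H\"older steps.
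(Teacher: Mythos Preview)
Your argument is correct and structurally matches the paper's proof. For (ii) and (iii) you carry out exactly the ``straightforward computations using the definitions and H\"older's inequality'' that the paper only alludes to. For (i) both proofs rest on the same key fact---that every $B\in\mathcal{L}(E_1,\ldots,E_n;G)$ induces a bounded $n$-linear map $\mathrm{Rad}(E_1)\times\cdots\times\mathrm{Rad}(E_n)\to\mathrm{Rad}(G)$ of norm $\le\|B\|$---and then apply the linear cotype inequality to $u$. The only difference is that the paper imports this lemma from \cite[Theorem~3.5]{botelhocampos} (phrased there as the well-definedness and norm of $\widetilde{B}$), whereas you supply a self-contained inductive proof via the conditional-expectation identity $\beta(x_j^{(1)})=\mathbb{E}_{\varepsilon'}[\varepsilon_j'\,\beta(\sum_i\varepsilon_i'x_i^{(1)})]$ and the sign-invariance of Rademacher averages. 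Your route is more elementary and avoids the external reference; the paper's route is shorter on the page but depends on the machinery of sequence classes developed elsewhere.
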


\begin{proof}(i) Given $T \in \mathcal{C}_q \circ \mathcal{L}(E_1, \ldots, E_n;F)$, write $T = u \circ B$ with $u \in \mathcal{C}_q(G;F)$ and $B \in \mathcal{L}\left(E_{1},\ldots,E_{n};F\right)$. By \cite[Theorem 3.5]{botelhocampos} and Theorem \ref{equivq}, respectively, the maps $\widetilde{B}$ and $\widetilde{u}$, respectively, are well-defined according to the diagram below.
\begin{center}
\begin{minipage}{6cm}
\xymatrix@C0pt@R30pt{
\mathrm{Rad}(E_1) \times \cdots \ar@/_/[d]_*{\widetilde{B}} \times  \mathrm{Rad}(E_n)  \ar[rrrrrrrrrr]^-*{\widetilde{T}}& & & & & & & & & & \ell_q(F) \\
 \mathrm{Rad}(G) \ar@/_/[urrrrrrrrrr]_*{\widetilde{u}}   & & & & & & &
}
\end{minipage}
\end{center}
Thus $\widetilde{T}$ is well defined by $\widetilde{u} \circ \widetilde{B}$ and $T$ has cotype $q$ by Theorem \ref{equivq}. Furthermore,
\[\|T\|_{{\cal C}_q^n} = \|\widetilde{T}\| \leq \|\widetilde{B}\|\cdot\|\widetilde{u}\| = \|B\|\cdot\|u\|,\]
for any factorization of $T$ in this fashion. Taking the infimum over all such factorizations we get $\|T\|_{{\cal C}_q^n} \leq \|T\|_{\mathcal{C}_q \circ \mathcal{L}}.$

\noindent (ii) and (iii) follow from straightforward computations using the definitions and H\"older's inequality.
\end{proof}

Observe that the inclusion $\mathcal{L}\left(\tau_{p_1},\ldots,\tau_{p_n}\right) \subseteq \tau_{p_1,\ldots,p_n}$ is trivial because it only makes sense for $1 \leq p_i \leq 2$, $i=1,\ldots,n$, and in this case we have $1 \leq \frac{n}{2} \leq \frac{1}{p_1} + \cdots \frac{1}{p_n},$ a situation in which $\tau_{p_1,\ldots,p_n} = \mathcal{L}$. We are thus impelled to investigate eventual inclusion relations between $\tau_{p_1,\ldots,p_n}$ and $\mathcal{L}\left(\tau_{r_1},\ldots,\tau_{r_n}\right)/
\left[\tau_{r_1},\ldots,\tau_{r_n}\right]$ with $p_j \neq r_j$ for some $j$. To avoid trivialities we shall henceforth suppose that $r_j > 1$ for some $j$. We are about to see that nontrivial relations of this type, if any, are rare. Moreover, we shall see that the inclusion in Theorem \ref{inclus}(iii) can be strict.

\begin{proposition} {\rm (a)} $\tau_{p_1, \ldots,p_n} \not\subseteq [\tau_{r_1},\ldots, \tau_{r_n}]$, hence $\tau_{p_1, \ldots,p_n} \not\subseteq {\cal L}(\tau_{r_1},\ldots, \tau_{r_n})$, for every proper type $(p_1, \ldots, p_n)$ for $n$-linear operators and all $1\leq r_1,\ldots, r_n \leq 2$.\\
{\rm (b)} ${\cal L}(\tau_{r_1},\ldots, \tau_{r_n}) \not\subseteq \tau_{p_1, \ldots,p_n}$ whenever  $(p_1, \ldots, p_n)$  is proper type for $n$-linear operators, $1\leq r_1,\ldots, r_n \leq 2$ and $\frac{1}{p_1} + \cdots +\frac{1}{p_n} < \max\left\{\frac{1}{r_1}, \ldots, \frac{1}{r_n}\right\}$.\\
{\rm (c)} $[{\cal C}_{q_1},\ldots,{\cal C}_{q_n}] \not\subseteq {\cal C}_q^n$ for every $ q \geq 1$, therefore $\mathcal{L}\left({\cal C}_{q_1},\ldots,{\cal C}_{q_n}\right) \varsubsetneq {\cal C}_q^n$ whenever  $\frac{1}{q} \leq \min\left\{1, \frac{1}{q_1}+ \cdots + \frac{1}{q_n}\right\}$.
\end{proposition}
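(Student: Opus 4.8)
The plan is to produce, for each item, an explicit operator witnessing the non-inclusion, always testing the membership conditions on the diagonal of a Rademacher-type sequence, since that is where the definitions of $\tau_{p_1,\ldots,p_n}$ and ${\cal C}_q^n$ become rigid.

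For (a), the point is that $[\tau_{r_1},\ldots,\tau_{r_n}]$ forces, via each $I_i$, a genuinely \emph{linear} type restriction in each variable, whereas $\tau_{p_1,\ldots,p_n}$ only constrains the diagonal. So I would take a proper type $(p_1,\ldots,p_n)$ and build an $n$-linear operator which belongs to $\tau_{p_1,\ldots,p_n}$ trivially (for instance one valued in a Hilbert space, using Example \ref{ex1}(b) with $\frac{1}{p_1}+\cdots+\frac{1}{p_n}=\frac12$, or an operator of finite type, which by Theorem \ref{teoideal} has any proper type) but whose partial linearization $I_i(T)$ is a linear operator onto an infinite-dimensional space without type $r_i$ for the prescribed $r_i\le 2$. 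Concretely: fix a Banach space $X_i$ with $\tau_{r_i}$-norm of $id_{X_i}$ infinite (e.g.\ $X_i=\ell_1$ if $r_i>1$; and if some $r_i=1$ then $[\tau_1,\ldots,\tau_1]=\mathcal L$ on the relevant components, so one handles that case by noting $\tau_{p_1,\ldots,p_n}\neq\mathcal L$ for proper $(p_1,\ldots,p_n)$ and exhibiting any continuous operator outside it, e.g.\ the operator of Example \ref{ex1}(a) adapted to $n$ variables). An operator like $T(x_1,\ldots,x_n)=\varphi_1(x_1)\cdots\varphi_{n-1}(x_{n-1})\,id_{X}(x_n)$ with $X$ of the bad type but still having type $(p_1,\ldots,p_n)$ as a multilinear map — this is exactly the $\varphi_1\otimes\cdots\otimes\varphi_{n-1}\otimes u$ construction studied in Proposition \ref{optensort}(a), which shows it has \emph{many} proper types even when $u$ has poor linear type — does the job, since $I_n(T)$ recovers $id_X$ up to scalars. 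The inclusion $\mathcal L(\tau_{r_1},\ldots,\tau_{r_n})\subseteq[\tau_{r_1},\ldots,\tau_{r_n}]$ then gives the second assertion for free.

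For (b), I would pick the extreme example making the factorization-norm bound unbeatable: take $T=\varphi_1\otimes\cdots\otimes\varphi_{n-1}\otimes id_{\ell_{r}}$ where $r=\arg\max\{1/r_1,\ldots,1/r_n\}$ (say the maximum is attained at the last coordinate, $r=r_n$), so that $T\in\mathcal L(\tau_{r_1},\ldots,\tau_{r_n})$ because $id_{\ell_{r_n}}$ has type $r_n$ and the other factors are linear functionals, hence of every linear type $\le 2$. Now test type $(p_1,\ldots,p_n)$ on $x_j^{(i)}=a_i$ (fixed, with $\varphi_i(a_i)\neq0$) for $i<n$ and $x_j^{(n)}=e_j\in\ell_{r_n}$: the left side of \eqref{multip} is $\asymp\|(e_j)_{j=1}^k\|_{\mathrm{Rad}(\ell_{r_n})}\asymp k^{1/r_n}$ (using that $\ell_{r_n}$ has the sign upper/lower $r_n$-estimate, cf.\ the Example preceding Proposition \ref{optimal}, or a direct Khintchine computation), while the right side is $\asymp\big(\sum_{j=1}^k\|e_j\|^{p_n}\big)^{1/p_n}=k^{1/p_n}\le k^{1/p_1+\cdots+1/p_n}$. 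The hypothesis $1/p_1+\cdots+1/p_n<\max_i 1/r_i=1/r_n$ then forces $k^{1/r_n}\le C k^{1/p_1+\cdots+1/p_n}$ for all $k$, a contradiction. This is really the same contradiction mechanism as in Example \ref{ex1}(a) and Proposition \ref{optimal}(a), just rearranged.

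For (c), the natural candidate is again $T=\varphi_1\otimes\cdots\otimes\varphi_{n-1}\otimes id_{\ell_s}$ for a suitable $s$: one wants $I_i(T)\in{\cal C}_{q_i}$ for each $i$ — which holds whenever $s$ is chosen so that $\ell_s$ has cotype $q_n$ (i.e.\ $s\le q_n$, taking the worst $q_i$ in the last slot; the other $I_i(T)$ are essentially rank-one-in-that-variable so have cotype $2$, hence any $q_i\ge 2$, and for $q_i<2$ one adjusts $\varphi_i$ to $id$ of a space of cotype $q_i$) — but $T\notin{\cal C}_q^n$. Testing \eqref{multiq} on the diagonal $x_j^{(i)}=a_i$, $i<n$, $x_j^{(n)}=e_j\in\ell_s$ gives left side $\asymp\big(\sum_{j=1}^k 1\big)^{1/q}=k^{1/q}$ while the right side is $\asymp\prod_{i}\|(\cdot)_{j=1}^k\|_{\mathrm{Rad}}\asymp k^{1/s}$ (the $i<n$ factors contributing $\asymp k^{1/2}$ each and the last $\asymp k^{1/s}$ when $\ell_s$ has the sign-estimate), so $q<s$ forces no cotype $q$. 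To get $[{\cal C}_{q_1},\ldots,{\cal C}_{q_n}]\not\subseteq{\cal C}_q^n$ for \emph{every} $q\ge1$ I would instead use $s$ large (or $c_0$-like) so that the linearizations still have the required cotypes while the diagonal test rules out every finite $q$, exactly as in Example \ref{ex1}(b); then for the strict-inclusion statement combine $\mathcal L({\cal C}_{q_1},\ldots,{\cal C}_{q_n})\subseteq[{\cal C}_{q_1},\ldots,{\cal C}_{q_n}]$ with Theorem \ref{inclus}(iii), which under $\frac1q\le\min\{1,\frac1{q_1}+\cdots+\frac1{q_n}\}$ gives $\mathcal L({\cal C}_{q_1},\ldots,{\cal C}_{q_n})\subseteq{\cal C}_q^n$, an inclusion now seen to be proper.

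The main obstacle I anticipate is bookkeeping rather than depth: in (b) and (c) one must ensure simultaneously that the test operator genuinely lies in the \emph{factorization/linearization} ideal (this needs the right linear cotype/type of each factor space, and care with the degenerate cases $r_i=1$ or $q_i\ge 2$ where some $\tau_{r_i}$ or ${\cal C}_{q_i}$ collapses to $\mathcal L$) while failing the diagonal estimate; and one must check that the operator is nonzero and continuous and that $(p_1,\ldots,p_n)$, $q$ remain proper types/cotypes throughout. The Rademacher-norm computation $\|(e_j)_{j=1}^k\|_{\mathrm{Rad}(\ell_s)}\asymp k^{1/s}$ is the one quantitative lemma to pin down carefully (via Khintchine's inequality or the sign-estimate properties already isolated in the paper), and everything else reduces to the $k^{\alpha}\le Ck^{\beta}$ contradiction already used repeatedly above.
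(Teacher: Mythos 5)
Your part (b) is correct and is essentially the paper's own argument: the paper works with an abstract space $E$ of type $r_n$ having the sign lower $r_n$-estimate and quotes Proposition \ref{optimal}(a), while you take $E=\ell_{r_n}$ and redo the same diagonal computation; the factorization through $B(\lambda_1,\ldots,\lambda_{n-1},x)=\lambda_1\cdots\lambda_{n-1}x$ is identical. Part (a), however, has a genuine gap. Your concrete witness $\varphi_1\otimes\cdots\otimes\varphi_{n-1}\otimes id_X$ with $X$ of ``bad'' linear type is not known to belong to $\tau_{p_1,\ldots,p_n}$ for the \emph{given} proper type, and in general it does not: Proposition \ref{optensort}(a) only yields the proper types having some coordinate equal to $p_{id_X}\leq 2$, and Proposition \ref{optimal}(a) shows that such operators positively \emph{fail} every proper type $(s_1,\ldots,s_n)$ with $\frac{1}{s_1}+\cdots+\frac{1}{s_n}<\frac{1}{p_{id_X}}$. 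For instance, with $X=\ell_1$ (the natural space without type $r_1>1$) the operator has no proper type at all, and for a proper type such as $(4,4)$ no space $X$ failing type $2$ is covered by Proposition \ref{optensort}(a) --- whether anything survives there is precisely Open question 1. Your fallback of an operator of finite type also fails, since such operators have finite-rank linearizations $I_i(T)$, which have every linear type. The paper's witness avoids all of this: take $E$ without type $r_1$ and the scalar-valued, hence finite-rank, operator $T(x,\varphi,\lambda_1,\ldots,\lambda_{n-2})=\lambda_1\cdots\lambda_{n-2}\varphi(x)$ on $E\times E'\times\mathbb{K}^{n-2}$; it has every proper type because it is of finite rank, yet $I_1(T)$ is an isometric embedding of $E$ by Hahn--Banach, so $I_1(T)\in\tau_{r_1}$ would force $E$ to have type $r_1$. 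The key idea you are missing is that a finite-rank \emph{multilinear} operator can have a linearization that is an isometric embedding of an arbitrarily bad space.

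Part (c) goes in the wrong direction. The first assertion as printed is a misprint: the ``therefore'' clause can only follow from $\mathcal{C}_q^n\not\subseteq[\mathcal{C}_{q_1},\ldots,\mathcal{C}_{q_n}]$, and that is what the paper proves --- the same scalar-valued $T$, now with $E$ failing cotype $q_1$, has cotype $q\geq 1$ by Proposition \ref{postofinito}(a) but $I_1(T)\notin\mathcal{C}_{q_1}$; combined with Theorem \ref{inclus}(iii) and $\mathcal{L}(\mathcal{C}_{q_1},\ldots,\mathcal{C}_{q_n})\subseteq[\mathcal{C}_{q_1},\ldots,\mathcal{C}_{q_n}]$ this makes the inclusion strict. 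Your reading --- producing an operator in $[\mathcal{C}_{q_1},\ldots,\mathcal{C}_{q_n}]\setminus\mathcal{C}_q^n$ --- cannot deliver the strictness (for that you need an element of $\mathcal{C}_q^n$ outside $\mathcal{L}(\mathcal{C}_{q_1},\ldots,\mathcal{C}_{q_n})$), and it is also untenable on its own terms: your candidate $\varphi_1\otimes\cdots\otimes\varphi_{n-1}\otimes id_{\ell_s}$ lies in $\mathcal{L}(\mathcal{C}_{q_1},\ldots,\mathcal{C}_{q_n})$ via the obvious factorization, hence lies in $\mathcal{C}_q^n$ whenever $\frac{1}{q}\leq\frac{1}{q_1}+\cdots+\frac{1}{q_n}$ by Theorem \ref{inclus}(iii). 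Consistently with this, your diagonal test only gives $k^{1/q}\leq M k^{(n-1)/2+1/s}$, which is no contradiction for any $q\geq 1$ once $n\geq 3$ (the constant sequences in the first $n-1$ slots already contribute $k^{(n-1)/2}$ to the right-hand side), and for $n=2$ only detects $q<2$.
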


\begin{proof} (a) Assume, wlog, that $r_1 > 1$. Let $E$ be a Banach space not having type $r_1$ and consider the continuous $n$-linear operator
$$T\colon E \times E' \times \mathbb{K}^{n-2} \longrightarrow \mathbb{K}~,~T(x, \varphi, \lambda_1,\ldots, \lambda_{n-2}) = \lambda_1\cdots\lambda_{n-2} \varphi(x).$$ As $T$ has finite rank, it has any proper type $(p_1,\ldots, p_n)$ (Example \ref{ex1}(a)). Suppose that $T$ belongs to the multi-ideal $[\tau_{r_1},\ldots, \tau_{r_n}]$. In this case the linear operator $I_1(T) \colon E \longrightarrow {\cal L}(E', \mathbb{K}^{n-2}; \mathbb{K})$ has type $r_1$. Since $I_1(T)$ is linear and, by the Hahn-Banach Theorem we have $\|I_1(T)(x)\| = \|x\|$ for every $ x \in E, $
this implies that $E$ has type $r_1$ -- a contradiction.\\
(b) Assume, wlog, that  $\frac{1}{p_1} + \cdots +\frac{1}{p_n} < \frac{1}{r_n}$. Let $E$ be a Banach space of type $r_n$ and with the Rademacher lower-$r_n$-estimate property. Choosing $0 \neq \varphi \in E'$, by Proposition \ref{optimal}(a) we know that $\varphi\otimes \stackrel{(n-1)}{\cdots}\otimes \varphi \otimes id_E$ fails to have type $(p_1, \ldots, p_n)$. On the other hand, considering the $n$-linear operator
$$B \colon \mathbb{K} \times \stackrel{(n-1)}{\cdots}\times \mathbb{K} \times E \longrightarrow E~,~B(\lambda_1, \ldots, \lambda_{n-1}, x) =  \lambda_1 \cdots \lambda_{n-1}x,$$
the factorization
$$\varphi\otimes \stackrel{(n-1)}{\cdots}\otimes \varphi \otimes id_E = B \circ(\varphi, \stackrel{(n-1)}{\ldots}, \varphi, id_E) $$
shows that $\varphi\otimes \stackrel{(n-1)}{\cdots}\otimes \varphi \otimes id_E$ belongs to ${\cal L}(\tau_{r_1}, \ldots, \tau_{r_1})$.\\
(c) Let $E$ be a Banach space not having cotype $q_1$, $0 \neq \varphi \in E'$ and consider the continuous $n$-linear operator $T$ defined in the proof of (a). As $q \geq 1$, $T$ has cotype $q$ because it is a finite rank operator (Example \ref{ex1}(a)). And $T$ does not belong to $[{\cal C}_{q_1},\ldots,{\cal C}_{q_n}]$ because otherwise, proceeding as in the proof of (a),  $E$ would have cotype $q_1$. The second assertion follows from the first and Proposition \ref{inclus}(iii).
\end{proof}

\medskip

\noindent{\bf Open question 2.} Is it true that ${\cal L}(\tau_{r_1},\ldots, \tau_{r_n}) \not\subseteq \tau_{p_1, \ldots,p_n}$ for every proper type $(p_1, \ldots, p_n)$ for $n$-linear operators and all $1\leq r_1,\ldots, r_n \leq 2$ with $r_j >1$ for some $j$?

\medskip

Just for the record we state the following coincidences:

\begin{corollary}
If $E_i$ has type $p_i$, $i \in \{1,\ldots,n\}$, then, regardless of the Banach space $F$,
\begin{align*}\tau_{p_1,\ldots,p_n}\left(E_{1},\ldots,E_{n};F\right) &= {\cal L}(\tau_{p_1},\ldots, \tau_{p_n})\left(E_{1},\ldots,E_{n};F\right) \\&=  [\tau_{p_1},\ldots, \tau_{p_n}]\left(E_{1},\ldots,E_{n};F\right)= \mathcal{L}\left(E_{1},\ldots,E_{n};F\right).
\end{align*}
If $E_i$ has cotype $q_i$, $i \in \{1,\ldots,n\}$, and $\frac{1}{q} \leq \frac{1}{q_1}+ \cdots + \frac{1}{q_n}$, then,  regardless of the Banach space $F$,
\begin{align*}{\cal C}_q^n\left(E_{1},\ldots,E_{n};F\right) &= {\cal L}({\cal C}_{q_1},\ldots, {\cal C}_{q_n})\left(E_{1},\ldots,E_{n};F\right) \\&=  [{\cal C}_{q_1},\ldots, {\cal C}_{q_n}]\left(E_{1},\ldots,E_{n};F\right)= \mathcal{L}\left(E_{1},\ldots,E_{n};F\right).
\end{align*}
\end{corollary}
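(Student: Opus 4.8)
The plan is to prove the corollary by chaining together inclusions that are already available, the coincidence theorems, and the trivial observation that the ambient space of all continuous multilinear operators is the largest possible multi-ideal. For the type part, assume $E_i$ has type $p_i$ for each $i$. Since every $E_i$ has type $p_i$, we have $1 \le p_i \le 2$ for all $i$, hence $\frac{1}{p_1}+\cdots+\frac{1}{p_n} \ge \frac{n}{2} \ge 1$. By the discussion after Definition~\ref{defp}, when $\frac{1}{p_1}+\cdots+\frac{1}{p_n}\ge 1$ every continuous $n$-linear operator has type $(p_1,\ldots,p_n)$, so $\tau_{p_1,\ldots,p_n}(E_1,\ldots,E_n;F)=\mathcal{L}(E_1,\ldots,E_n;F)$ regardless of $F$. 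This already pins down one of the four terms.

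Next I would identify the factorization term. If $u_i = id_{E_i} \in \tau_{p_i}(E_i;E_i)$ (which holds precisely because $E_i$ has type $p_i$), then any $T \in \mathcal{L}(E_1,\ldots,E_n;F)$ factors trivially as $T = T\circ(id_{E_1},\ldots,id_{E_n})$, exhibiting $T$ as an element of $\mathcal{L}(\tau_{p_1},\ldots,\tau_{p_n})(E_1,\ldots,E_n;F)$. Hence $\mathcal{L}(\tau_{p_1},\ldots,\tau_{p_n})(E_1,\ldots,E_n;F)=\mathcal{L}(E_1,\ldots,E_n;F)$ as well. Finally, invoking the known general inclusion $\mathcal{L}(\mathcal{I}_1,\ldots,\mathcal{I}_n)\subseteq[\mathcal{I}_1,\ldots,\mathcal{I}_n]\subseteq\mathcal{L}(E_1,\ldots,E_n;F)$ (stated in the excerpt just before Theorem~\ref{inclus}), the linearization term $[\tau_{p_1},\ldots,\tau_{p_n}](E_1,\ldots,E_n;F)$ is squeezed between two copies of $\mathcal{L}(E_1,\ldots,E_n;F)$ and therefore equals it too. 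All four spaces coincide with $\mathcal{L}(E_1,\ldots,E_n;F)$, and the chain of equalities follows.

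The cotype part is entirely parallel. Assume $E_i$ has cotype $q_i$ and $\frac{1}{q}\le\frac{1}{q_1}+\cdots+\frac{1}{q_n}$. By Theorem~\ref{inclus}(iii) (together with $q\ge 2/n$ being a proper cotype and the hypothesis on the exponents), $\mathcal{L}(\mathcal{C}_{q_1},\ldots,\mathcal{C}_{q_n})\subseteq\mathcal{C}_q^n$; conversely, since $id_{E_i}\in\mathcal{C}_{q_i}(E_i;E_i)$, the same trivial factorization $T=T\circ(id_{E_1},\ldots,id_{E_n})$ shows $\mathcal{L}(E_1,\ldots,E_n;F)\subseteq\mathcal{L}(\mathcal{C}_{q_1},\ldots,\mathcal{C}_{q_n})(E_1,\ldots,E_n;F)$, so this factorization space is all of $\mathcal{L}(E_1,\ldots,E_n;F)$, and a fortiori so are $[\mathcal{C}_{q_1},\ldots,\mathcal{C}_{q_n}]$ and $\mathcal{C}_q^n$ on these fixed spaces. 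I do not expect a genuine obstacle here: the only point requiring a moment's care is checking that the hypotheses on the $E_i$ force the relevant identity operators into the linear ideals and, in the type case, force $\sum 1/p_i \ge 1$ so that $\tau_{p_1,\ldots,p_n}$ collapses to $\mathcal{L}$; everything else is bookkeeping with inclusions already established in the paper. The norms agree as well, since on $\mathcal{L}(E_1,\ldots,E_n;F)$ all of these quasi-norms are dominated by and dominate $\|\cdot\|$ via the trivial factorization, but the statement only claims equality of the spaces, so I would not belabour the norm identities.
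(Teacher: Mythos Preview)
Your proof is correct and follows essentially the same approach as the paper's: the paper's proof also hinges on the trivial factorization $T = T \circ (id_{E_1},\ldots,id_{E_n})$ combined with the previously established inclusions (in particular Theorem~\ref{inclus}(iii) for the cotype case), and you have simply spelled out the details that the paper leaves implicit.
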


\begin{proof} Just combine the factorization $T=T\circ(id_{E_1},\ldots,id_{E_n})$ for any $T \in \mathcal{L}\left(E_{1},\ldots,E_{n};F\right)$ with our previous results/remarks (for (b) use Proposition \ref{inclus}(iii)).
\end{proof}

\section{Maximality and Aron-Berner stability}

In this last section we prove that the multi-ideals $\tau_{p_1,\ldots,p_n}$ and ${\cal C}_q^n$ enjoy and important typical multilinear property, namely, the Aron-Berner stability. We need some notation/terminology.

Let $\mathcal{U}$ be an ultrafilter on a set $I$. The ultraproduct (along $\mathcal{U}$) of a family $(E_\lambda)_{\lambda \in I}$ of Banach spaces $E_\lambda$, denoted by $(E_\lambda)_{\mathcal{U}}$, is a Banach space with its usual norm
$$\|(x_\lambda)_{\mathcal{U}}\|_{(E_\lambda)_{\mathcal{U}}} = \lim_{\mathcal{U}} \|x_\lambda\| {\rm ~ for~every~} (x_\lambda)_{\mathcal{U}} \in (E_\lambda)_{\mathcal{U}}.$$
For background on ultraproducts of Banach spaces we refer to  \cite[Chapter 8]{djt}. Given a family $(T_\lambda)_{\lambda \in I}$ of continuous $n$-linear operators $T_\lambda\colon E_{1,\lambda} \times \cdots \times E_{n,\lambda}\longrightarrow F_\lambda$ such that $\sup_{\lambda \in I}\|T_\lambda\| < \infty$, we can define the $n$-linear operator $\overline{(T_\lambda)}_{\mathcal{U}}\colon (E_{1,\lambda})_{\mathcal{U}} \times \cdots \times (E_{n,\lambda})_{\mathcal{U}} \longrightarrow (F_{\lambda})_{\mathcal{U}}$ by
\[\overline{(T_\lambda)}_{\mathcal{U}} \left(\left(x_\lambda^1\right)_{\mathcal{U}},\ldots,\left(x_\lambda^n\right)_{\mathcal{U}}\right) := \left(T_\lambda\left(x_\lambda^1,\ldots,x_\lambda^n\right)\right)_{\mathcal{U}}.\]
A straightforward computation gives $\left\|\overline{(T_\lambda)}_{\mathcal{U}}\right\| = \lim_{\mathcal{U}}\|T_\lambda\|$.

A multi-ideal $\mathcal{M}$ is called:\\
$\bullet$ {\it regular} if $J_F \circ T \in \mathcal{M}(E_1,\ldots,E_n;F'')$ implies that $T \in \mathcal{M}(E_1,\ldots,E_n;F)$ and $\|T\|_{\mathcal{M}} = \|J_F \circ T\|_{\mathcal{M}}$. \\
$\bullet$ {\it ultrastable} if for any family  $T_\lambda \in \mathcal{M}(E_{1,\lambda},\ldots,E_{n,\lambda};F_\lambda)$ such that $\|T_\lambda\|_{\mathcal{M}} \leq C$ for every $\lambda \in I$, it holds $\overline{(T_\lambda)}_{\mathcal{U}} \in \mathcal{M}\left((E_{1,\lambda})_{\mathcal{U}},\ldots,(E_{n,\lambda})_{\mathcal{U}};(F_{\lambda})_
{\mathcal{U}}\right)$ and $\left\|\overline{(T_\lambda)}_{\mathcal{U}}\right\|_\mathcal{M} \leq \sup_\lambda\|T_\lambda\|_\mathcal{M}$.\\
$\bullet$ {\it maximal} if
\[\|T\|_{\mathcal{M}^{\mathrm{max}}} := \sup\left\{\left\|Q_L^F \circ T|_{M_1\times \cdots \times M_n}\right\|_\mathcal{M}:  M_j \in \mathrm{FIN}(E_j), L \in \mathrm{COFIN}(F) \right\}< \infty\]
implies that $T \in \mathcal{M}(E_1, \ldots, E_n;F)$ and $\|T\|_{\mathcal{M}} = \|T\|_{\mathcal{M}^{\mathrm{max}}}$, where $Q_L^F\colon F \longrightarrow F/L$ is the canonical quotient map, $\mathrm{FIN}(E_j)$ is the set of all finite-dimensional subspaces of $E_j$ and $\mathrm{COFIN}(F)$ is the set of all finite-codimensional closed subspaces of $F$.
Is well-known that the linear operator ideals $\tau_p$ and ${\cal C}_q$ are maximal (see, for instance, \cite[p.\,203]{defantfloret}). In \cite[Section 4]{floret02} it was proved that a multi-ideal $\mathcal{M}$ is maximal if and only if it is regular and ultrastable. We will use this characterization to prove the maximality of the multi-ideals of proper type/cotype:

\begin{theorem}\label{maxi}
The multi-ideals $\tau_{p_1,\ldots,p_n}$ and ${\cal C}_q^n$ are regular and ultrastable and, therefore, are maximal.
\end{theorem}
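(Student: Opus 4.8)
The plan is to establish regularity and ultrastability separately and then invoke the Floret--García characterization (maximal $\iff$ regular $+$ ultrastable) cited before the statement. For both properties the key tool will be the sequence-space reformulations of Theorems \ref{equivp} and \ref{equivq}, which translate type $(p_1,\ldots,p_n)$ and cotype $q$ into the boundedness of the induced $n$-linear operator $\widetilde{T}$ between the appropriate sequence spaces, together with the fact (stated in the excerpt) that $\ell_{p_i}(\cdot)$, $\ell_q(\cdot)$ and $\mathrm{RAD}(\cdot)$ are finitely determined sequence classes. The finite determination is precisely what makes the abstract machinery of \cite{botelhocampos} applicable, and I expect regularity and ultrastability to be recorded there (or to follow from general principles about multi-ideals characterized by finitely determined sequence classes); the cleanest route is to cite the relevant results of \cite{botelhocampos} rather than to redo the computations.

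For \emph{regularity}, suppose $J_F \circ T \in \tau_{p_1,\ldots,p_n}(E_1,\ldots,E_n;F'')$. Since $J_F$ is an isometric embedding, for any finitely many vectors we have $\|(J_F(T(x_j^{(1)},\ldots,x_j^{(n)})))_{j=1}^k\|_{\mathrm{Rad}(F'')} = \|(T(x_j^{(1)},\ldots,x_j^{(n)}))_{j=1}^k\|_{\mathrm{Rad}(F)}$, because the Rademacher norm is computed coordinatewise from the norm of $F$ and $J_F$ preserves that norm; hence the defining inequality \eqref{multip} transfers verbatim from $J_F\circ T$ to $T$ with the same constant, giving $T\in\tau_{p_1,\ldots,p_n}$ and $\|T\|_{\tau_{p_1,\ldots,p_n}} = \|J_F\circ T\|_{\tau_{p_1,\ldots,p_n}}$. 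The argument for ${\cal C}_q^n$ is identical, using \eqref{multiq} and the fact that the $\ell_q$-norm on the left is likewise an isometric invariant under $J_F$. So regularity is essentially immediate from the coordinatewise, isometric nature of the sequence norms involved.

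For \emph{ultrastability}, let $T_\lambda \in \tau_{p_1,\ldots,p_n}(E_{1,\lambda},\ldots,E_{n,\lambda};F_\lambda)$ with $\|T_\lambda\|_{\tau_{p_1,\ldots,p_n}} \le C$ for all $\lambda$, and let $\overline{(T_\lambda)}_{\mathcal U}$ be the induced $n$-linear operator on the ultraproducts. Fix finitely many vectors $((x_j^{(1)})_{\mathcal U},\ldots,(x_j^{(n)})_{\mathcal U})_{j=1}^k$ in the ultraproducts, represented by $(x_{j,\lambda}^{(i)})_\lambda$. Applying the type inequality \eqref{multip} to each $T_\lambda$ with the representatives gives, for every $\lambda$,
\[
\left\|\left(T_\lambda(x_{j,\lambda}^{(1)},\ldots,x_{j,\lambda}^{(n)})\right)_{j=1}^k\right\|_{\mathrm{Rad}(F_\lambda)} \le C\cdot\prod_{i=1}^n\left\|(x_{j,\lambda}^{(i)})_{j=1}^k\right\|_{p_i}.
\]
Now take the limit along $\mathcal U$. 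The right-hand side converges to $C\cdot\prod_{i=1}^n\|((x_j^{(i)})_{\mathcal U})_{j=1}^k\|_{p_i}$ since the $\ell_{p_i}$-norm of a finite block is a continuous function of the finitely many vector norms and $\|(x_{j,\lambda}^{(i)})_\lambda\|_{(E_{i,\lambda})_{\mathcal U}} = \lim_{\mathcal U}\|x_{j,\lambda}^{(i)}\|$. The left-hand side requires identifying $\lim_{\mathcal U}\|(y_{j,\lambda})_{j=1}^k\|_{\mathrm{Rad}(F_\lambda)}$ with $\|((y_{j,\lambda})_{\mathcal U})_{j=1}^k\|_{\mathrm{Rad}((F_\lambda)_{\mathcal U})}$ for $y_{j,\lambda} = T_\lambda(x_{j,\lambda}^{(1)},\ldots,x_{j,\lambda}^{(n)})$; this is the one genuinely technical point, and it holds because the Rademacher norm of a \emph{finite} block $\|\sum_{j=1}^k r_j y_j\|_{L_2(F)}$ is an average over finitely many sign choices of $\|\sum_{j=1}^k \varepsilon_j y_j\|_F$, each of which is a continuous function of $k$ vectors, so it commutes with the ultralimit (equivalently, one may invoke that $\mathrm{RAD}(\cdot)$ is a finitely determined sequence class, which is exactly the abstract incarnation of this). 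Since $\overline{(T_\lambda)}_{\mathcal U}((x_j^{(1)})_{\mathcal U},\ldots,(x_j^{(n)})_{\mathcal U}) = (y_{j,\lambda})_{\mathcal U}$, we conclude \eqref{multip} holds for $\overline{(T_\lambda)}_{\mathcal U}$ with constant $C$, so $\overline{(T_\lambda)}_{\mathcal U} \in \tau_{p_1,\ldots,p_n}$ and $\|\overline{(T_\lambda)}_{\mathcal U}\|_{\tau_{p_1,\ldots,p_n}} \le \sup_\lambda\|T_\lambda\|_{\tau_{p_1,\ldots,p_n}}$. The cotype case is handled the same way, swapping the roles: the $\ell_q(\cdot)$-norm on the output is finitely determined (trivially, as a finite sum), and the $\mathrm{RAD}(E_i)$-norms on the inputs pass to the ultralimit by the same finite-block argument. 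The main obstacle, as noted, is the commutation of the finite Rademacher norm with the ultralimit; once that is settled (or quoted from \cite{botelhocampos}), both conclusions follow, and maximality is then automatic from \cite[Section 4]{floret02}.
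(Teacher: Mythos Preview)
Your proposal is correct and follows essentially the same route as the paper: both establish regularity directly from the fact that $J_F$ is an isometric embedding (so the finite-block $\mathrm{Rad}$ and $\ell_q$ norms are preserved), and both prove ultrastability by applying the defining inequality coordinatewise for each $T_\lambda$, then passing to the ultralimit using that the finite Rademacher norm is a finite average over sign choices and hence commutes with $\lim_{\mathcal U}$; maximality then follows from \cite[Section 4]{floret02}. The only cosmetic differences are that the paper writes out the cotype case while you write out the type case, and the paper does not invoke \cite{botelhocampos} here but carries out the short computation directly.
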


\begin{proof}
It is immediate that $\tau_{p_1,\ldots,p_n}$ and ${\cal C}_q^n$ are regular. Let us prove the ultrastability of ${\cal C}_q^n$ (the case of $\tau_{p_1,\ldots,p_n}$ is analogous). For any $\lambda \in I$, let  $T_\lambda \in {\cal C}_q^n(E_{1,\lambda},\ldots,E_{n,\lambda};F_\lambda)$ be such that $\sup_\lambda\|T_\lambda\|_{{\cal C}_q^n} \leq C$. Then for any $k \in \mathbb{N}$ and all $\left(x_\lambda^{i,j}\right)_{\mathcal{U}} \in (E_{i,\lambda})_\mathcal{U} $, $j= 1,\ldots,k$, $i = 1, \ldots,n$,
\begin{align*}
& \left(\sum_{j=1}^k \left\Vert \overline{(T_\lambda)}_{\mathcal{U}} \left(\left(x_\lambda^{1,j}\right)_{\mathcal{U}},\ldots,\left(x_\lambda^{n,j}\right)_{\mathcal{U}}\right) \right\Vert^q_{(F_\lambda)_\mathcal{U}}\right)^{1/q}\\
& ~~~~~= \left(\sum_{j=1}^k \left\Vert \left(T_\lambda\left(x_\lambda^{1,j},\ldots,x_\lambda^{n,j}\right)\right)_{\mathcal{U}} \right\Vert^q_{(F_\lambda)_\mathcal{U}}\right)^{1/q}
= \lim_{\mathcal{U}} \left(\sum_{j=1}^k  \left\Vert T_\lambda\left(x_\lambda^{1,j},\ldots,x_\lambda^{n,j}\right) \right\Vert^q_{F_\lambda}\right)^{1/q}\\
& ~~~~~\leq \lim_{\mathcal{U}} \left[ \|T_\lambda\|_{C_q^n} \cdot \prod_{i=1}^n \left(\int_0^1 \left\Vert \sum_{j=1}^k r_j(t)x_\lambda^{i,j}\right\Vert^2_{E_{i,\lambda}} dt\right)^{1/2}\right]\\
& ~~~~~\leq \sup_\lambda \|T_\lambda\|_{C_q^n} \cdot \prod_{i=1}^n \lim_{\mathcal{U}} \left(\int_0^1 \left\Vert \sum_{j=1}^k r_j(t)x_\lambda^{i,j}\right\Vert^2_{E_{i,\lambda}} dt\right)^{1/2}\\
& ~~~~~\overset{(*)}{=} \sup_\lambda \|T_\lambda\|_{C_q^n} \cdot \prod_{i=1}^n \left(\int_0^1 \left\Vert \sum_{j=1}^k r_j(t)\left(x_\lambda^{i,j}\right)_{\mathcal{U}}\right\Vert^2_{(E_{i,\lambda})_\mathcal{U}} dt\right)^{1/2},
\end{align*}
where in $(*)$ we use the fact that $\displaystyle\int_0^1 \left\Vert \sum_{j=1}^k r_j(t)x_\lambda^{i,j}\right\Vert^2_{E_{i,\lambda}} dt$ is a finite sum for every $i$. Thus
\[\overline{(T_\lambda)}_{\mathcal{U}} \in {\cal C}_q^n\left((E_{1,\lambda})_{\mathcal{U}},\ldots,(E_{n,\lambda})_{\mathcal{U}};(F_{\lambda})_
{\mathcal{U}}\right)\ \ \mathrm{and}\ \ \left\|\overline{(T_\lambda)}_{\mathcal{U}}\right\|_{{\cal C}_q^n} \leq \sup_\lambda\|T_\lambda\|_{{\cal C}_q^n
}.\]
\end{proof}

Now we can address the Aron-Berner stability of the ideals $\tau_{p_1,\ldots,p_n}$ and ${\cal C}_q^n$. Given an $n$-linear operator $A \in {\cal L}(E_1, \ldots, E_n;F)$, the Aron-Berner (or Arens or canonical) extension  $AB(A)$ of $A$ is an $n$-linear operator  $AB(A) \colon E_1'' \times \cdots \times E_n'' \longrightarrow F''$ defined in the following fashion: given $y_j'' \in E_j''$, $j = 1, \ldots,n $, define
\begin{equation}\label{iterated}AB(A)(y_1'', \ldots, y_n'') = \underset{\alpha_1}{w^*\!\!\mathrm{-}\!\lim} \cdots \underset{\alpha_n}{w^*\!\!\mathrm{-}\!\lim}\, (J_F\circ A)(x_{\alpha_1}, \ldots, x_{\alpha_n}),
\end{equation}
where, for $j = 1, \ldots, n$, $(x_{\alpha_j} )_{\alpha_j}$ is a net in $E_j$ such that the net $\left(J_{E_j}(x_{\alpha_j} )\right)_{\alpha_j}$ converges to $y_j''$ in the weak-star topology. $AB(A)$ is an extension of $A$ in the sense that
\begin{equation}AB(A) \circ (J_{E_1}, \ldots, J_{E_n}) = J_F \circ A \label{aronberner}
\end{equation}
(see \cite[Proposition 2.1]{AB}). Depending on the order the iterated limit is taken in (\ref{iterated}), $A$ can have several (at most $n!$) different Aron-Berner extensions. For more information on Aron-Berner extensions see, e.g., \cite{daviegamelin, dineen}. In the terminology of \cite{scand}, next result says that the ideals of multilinear operators of some proper type or of some proper cotype are Aron-Berner stable:

\begin{corollary} Let $(p_1, \ldots, p_n)$ be a proper type and $q$ be a proper cotype for $n$-linear operators. The following are equivalent for a multilinear operator $A \in {\cal L}(E_1, \ldots, E_n;F)$:\\
{\rm (a)} $A$ is of type $(p_1, \ldots, p_n)$ (of cotype $q$, respectively);\\
{\rm (b)} All Aron-Berner extensions of $A$ are of type $(p_1, \ldots, p_n)$ (of cotype $q$, respectively);\\
{\rm (c)} Some Aron-Berner extension of $A$ is of type $(p_1, \ldots, p_n)$ (of cotype $q$, respectively).
\end{corollary}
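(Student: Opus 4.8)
The plan is to derive all three equivalences from the maximality of $\tau_{p_1,\ldots,p_n}$ and ${\cal C}_q^n$ proved in Theorem~\ref{maxi}, together with the defining relation (\ref{aronberner}) of the Aron-Berner extension. Throughout, write $\mathcal{M}$ for the multi-ideal under consideration; the argument is the same for type and for cotype. The implication $(b)\Rightarrow(c)$ is trivial, since every $n$-linear operator admits at least one Aron-Berner extension.

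For $(c)\Rightarrow(a)$, suppose some Aron-Berner extension $AB(A)$ of $A$ lies in $\mathcal{M}(E_1'',\ldots,E_n'';F'')$. By (\ref{aronberner}) we have $J_F\circ A = AB(A)\circ(J_{E_1},\ldots,J_{E_n})$; since $\mathcal{M}$ is a (quasi-)Banach multi-ideal (Theorem~\ref{teoideal}) and each $J_{E_j}$ has norm $1$, the ideal property yields $J_F\circ A\in\mathcal{M}(E_1,\ldots,E_n;F'')$ with $\|J_F\circ A\|_{\mathcal{M}}\le\|AB(A)\|_{\mathcal{M}}$. As $\mathcal{M}$ is regular (Theorem~\ref{maxi}), it follows that $A\in\mathcal{M}(E_1,\ldots,E_n;F)$ and $\|A\|_{\mathcal{M}}=\|J_F\circ A\|_{\mathcal{M}}\le\|AB(A)\|_{\mathcal{M}}$.

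The core of the matter is $(a)\Rightarrow(b)$, and here the point is that a maximal multi-ideal is Aron-Berner stable. Assuming $A\in\mathcal{M}(E_1,\ldots,E_n;F)$, I would bound the maximal norm of an arbitrary extension $AB(A)$: fix $M_j\in\mathrm{FIN}(E_j'')$ for $j=1,\ldots,n$, $L\in\mathrm{COFIN}(F'')$, and $\varepsilon>0$. Using the principle of local reflexivity in each coordinate (and on the finite-dimensional space $F''/L$, together with a finite-dimensional subspace of $F'$ separating it) one produces operators $v_j\colon M_j\to E_j$ with $\|v_j\|\le 1+\varepsilon$ and an operator $S\colon F\to F''/L$ with $\|S\|\le 1$ such that $Q_L^{F''}\circ AB(A)|_{M_1\times\cdots\times M_n}$ coincides, up to an error absorbed by $\varepsilon$, with $S\circ A\circ(v_1,\ldots,v_n)$. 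The ideal inequality then gives $\|Q_L^{F''}\circ AB(A)|_{M_1\times\cdots\times M_n}\|_{\mathcal{M}}\le(1+\varepsilon)^n\|A\|_{\mathcal{M}}$; letting $\varepsilon\to 0$ and taking the supremum over all $M_1,\ldots,M_n$ and $L$ yields $\|AB(A)\|_{\mathcal{M}^{\mathrm{max}}}\le\|A\|_{\mathcal{M}}<\infty$. Since $\mathcal{M}$ is maximal (Theorem~\ref{maxi}), this forces $AB(A)\in\mathcal{M}(E_1'',\ldots,E_n'';F'')$ with $\|AB(A)\|_{\mathcal{M}}\le\|A\|_{\mathcal{M}}$; combined with $(c)\Rightarrow(a)$ one in fact gets $\|AB(A)\|_{\mathcal{M}}=\|A\|_{\mathcal{M}}$ for every Aron-Berner extension. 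An alternative to the local-reflexivity argument is to use the isometric embedding of each $E_j''$ into an ultrapower $(E_j)_{\mathcal{U}}$: for a suitable $\mathcal{U}$ and a suitable ordering of the iterated weak-star limits in (\ref{iterated}), $AB(A)$ factors, after composing with the canonical maps, through the ultrapower operator $\overline{(A)}_{\mathcal{U}}$, which belongs to $\mathcal{M}$ by ultrastability (Theorem~\ref{maxi}).

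I expect the only genuine obstacle to be the linearization step in $(a)\Rightarrow(b)$: one must check that the local-reflexivity operators $v_j$ can be chosen simultaneously in all $n$ coordinates so that $Q_L^{F''}\circ AB(A)$, restricted to $M_1\times\cdots\times M_n$, is truly $\varepsilon$-close to a factorization through $A$. This is the multilinear counterpart of the classical fact that a maximal operator ideal gives $A$ and its bidual extension the same ideal norm, and in the terminology and with the machinery of \cite{scand} it is exactly the statement that maximal multi-ideals are Aron-Berner stable. In the final write-up I would therefore combine Theorem~\ref{maxi} with that result of \cite{scand} and spell out only the elementary chain $(b)\Rightarrow(c)\Rightarrow(a)$ displayed above.
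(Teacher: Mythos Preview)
Your proposal is correct and your treatment of $(b)\Rightarrow(c)\Rightarrow(a)$ matches the paper's exactly (the paper also uses (\ref{aronberner}) and the isometric embedding $J_F$; regularity is implicit there). The route for $(a)\Rightarrow(b)$, however, is genuinely different.

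The paper does not bound $\|AB(A)\|_{\mathcal{M}^{\max}}$ directly. Instead it passes to $AB(J_F\circ A)\colon E_1''\times\cdots\times E_n''\to F^{(iv)}$ (taken with the same order of iterated limits as $AB(A)$), invokes the maximality of $\mathcal{M}$ together with a result of Galicer--Villafa\~ne \cite[p.\,1750]{galicerjmaa} to get that $AB(J_F\circ A)\in\mathcal{M}$ with range contained in $J_{F''}(F'')$, and then composes with the Dixmier projection $\pi\colon F^{(iv)}\to F''$ to recover $AB(A)=\pi\circ AB(J_F\circ A)\in\mathcal{M}$ by the ideal property. In short: lift to the fourth dual, cite the literature, project back down.

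Your approach --- controlling the maximal norm of $AB(A)$ directly via local reflexivity in each coordinate (or, alternatively, via ultrapowers and ultrastability) --- is the standard way one \emph{proves} that maximality implies Aron-Berner stability, so what you sketch is essentially the content behind the citation the paper uses. The trade-off is that your argument is more self-contained but longer; the paper's is a two-line reduction once the external result is granted, at the cost of the extra $F^{(iv)}$/Dixmier manoeuvre. One bibliographic correction: the result you need in your final paragraph is in \cite{galicerjmaa}, not \cite{scand}; the latter only supplies the terminology ``Aron-Berner stable''.
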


\begin{proof} As $J_E$ is a linear isometric embedding, (c) $\Longrightarrow$ (a)  follows easily from (\ref{aronberner}); and as (b) $\Longrightarrow$ (c) is obvious, we just have to prove (a) $\Longrightarrow$ (b). Suppose that $A$ has type $(p_1, \ldots, p_n)$ and that $AB(A)$ is an Aron-Berner extension of $A$. Let
$$AB(J_F \circ A)\colon E_1'' \times \cdots \times E_n'' \longrightarrow F^{(iv)}$$ be the Aron-Bener extension of $J_F \circ A$ with the same order of iterated limits as in $AB(A)$. Any dual space is canonically complemented in its bidual (Dixmier's Theorem), and we denote by $\pi$ the canonical projection from $F^{(iv)}$ to $F''$. By Theorem \ref{maxi} we know that the multi-ideal $\tau_{p_1,\ldots,p_n}$ is maximal, so from \cite[p.\,1750]{galicerjmaa} it follows that $AB(J_F \circ A)$ is of type $(p_1, \ldots, p_n)$ and that its range lies in $J_{F''}(F'')$. In particular, the diagram below is commutative:

\begin{equation*}\label{diagram}
\begin{gathered}
\xymatrix@C5pt@R15pt{
E_1 \ar@/_/[dd]_*{J_{E_1}} & \times & \cdots  & \times & E_n \ar@/_/[dd]_*{J_{E_n}} \ar[rrrrr]^*{A} & & & & & F \ar[rrrrr]^*{J_F} & & & & & F^{''} &\\
&  & \cdots &  &  & & & & &  & & & & &  &\\
E_1^{''} & \times & \cdots & \times & E_n^{''} \ar[rrrrrrrrrr]_*{AB(J_F \circ A)} \ar@/_/[uurrrrrrrrrr]^*{AB(A)} & & & & & & & & & & F^{(iv)} \ar@/_/[uu]_*{\pi} & \hspace{-0.6em}\supseteq J_{F^{''}}(F^{''}).
}
\end{gathered}
\end{equation*}
By the ideal property we have that $AB(A) = \pi \circ AB(J_F \circ A)$ is of type $(p_1, \ldots, p_n)$. The case of cotype is analogous.
\end{proof}

\noindent{\bf Acknowledgment.} The authors thank D. Galicer for his helpful suggestions and for drawing our attention to reference \cite{galicerjmaa}.

\vspace{1cm}

\noindent Faculdade de Matem\'atica~~~~~~~~~~~~~~~~~~~~~~Departamento de Ci\^{e}ncias Exatas\\
Universidade Federal de Uberl\^andia~~~~~~~~ Universidade Federal da Para\'iba\\
38.400-902 -- Uberl\^andia -- Brazil~~~~~~~~~~~~ 58.297-000 -- Rio Tinto -- Brazil\\
e-mail: botelho@ufu.br ~~~~~~~~~~~~~~~~~~~~~~~~~e-mails: jamilson@dcx.ufpb.br,\\ \hspace*{9,3cm}jamilsonrc@gmail.com
\end{document}